\newcommand{\FF}{\mathbb{F}}
\newcommand{\ZZ}{\mathbb{Z}}
\newcommand{\fa}{\mathfrak{a}}
\newcommand{\fd}{\mathfrak{d}}
\newcommand{\fs}{\mathfrak{s}}
\newcommand{\fj}{\mathfrak{j}}
\newcommand{\ml}{\mathcal{L}}
\newcommand{\mj}{\mathcal{J}}
\newcommand{\co}{\mathcal{O}}
\renewcommand{\d}{\partial}
\DeclareMathOperator{\id}{Id}
\DeclareMathOperator{\Hom}{Hom}
\DeclareMathOperator{\ad}{ad}
\DeclareMathOperator{\cur}{Cur}
\DeclareMathOperator{\Der}{Der}
\DeclareMathOperator{\inder}{Inder}
\DeclareMathOperator{\Tor}{Tor}
\DeclareMathOperator{\js}{JS}
\DeclareMathOperator{\jck}{JCK}
\DeclareMathOperator{\coef}{\mathcal{A}}
\DeclareMathOperator{\aut}{Aut}
\DeclareMathOperator{\sli}{sl}
\theoremstyle{plain}
\newtheorem{theorem}{Theorem}
\newtheorem{lemma}[theorem]{Lemma}
\newtheorem{proposition}[theorem]{Proposition}
\newtheorem{corollary}[theorem]{Corollary}
\theoremstyle{definition}
\newtheorem{example}[theorem]{Example}
\theoremstyle{remark}
\newtheorem{remark}[theorem]{Remark}
\numberwithin{theorem}{section}
\numberwithin{equation}{section}
\title{Simple Jordan conformal superalgebras}
\author{Victor~G.~Kac}
\address{Department of Mathematics, MIT, Cambridge MA 02139}
\email{kac@math.mit.edu}
\author{Alexander Retakh}
\address{Deparment of Mathematics, Stony Brook University, Stony Brook, NY 11794}
\email{retakh@math.sunysb.edu}
\begin{document}

\begin{abstract}
We classify simple finite Jordan conformal superalgebras and also establish preliminary results for the classification of simple finite Jordan pseudoalgebras.
\end{abstract}

\maketitle

\section*{Introduction}

Lie conformal superalgebras first appeared as algebraic structures that encode the singular part of the operator product expansion in two-dimensional conformal field theory.  Their generalization, the Lie pseudoalgebras, are closely connected to the Hamiltonian formalism in the theory of nonlinear evolutionary equations \cite{BDK}.

The conformal superalgebra (and more generally pseudoalgebra) formalism is also useful in dealing with other varieties of algebras.  In the present paper we classify simple Jordan conformal superalgebras and also establish preliminary results for the classification of simple Jordan pseudoalgebras.

It was shown in \cite{Zel} that simple finite Jordan conformal algebras must be current conformal algebras; this result was later extended to (non-super)pseudoalgebras in \cite{Kol}.  Both papers utilize the Tits--Kantor--Koecher (TKK) construction and rely on the previously known classification of Lie pseudoalgebras, with \cite{Kol} defining the TKK construction directly for pseudoalgebras.    

The world of Jordan conformal superalgebras is more complex.  Apart from the current superalgebras, we have one more series of superalgebras (related to Lie conformal superalgebras of contact type) and two exceptional superalgebras.
Here, to work directly in the category of pseudoalgebras, i.e. to use the pseudoalgebraic analog of the TKK construction, we would need to classify all possible short gradings of finite type simple pseudoalgebras, but the classification of the latter has not been done.  Instead, we take a shortcut and operate mostly on the level of annihilation algebras, whose classification can be reduced to that of simple linearly compact Jordan superalgebras, classified in \cite{CKj}.  As such we need only to relate the automorhisms and derivations of linerly compact Jordan superalgebras to those of their TKK envelopes.

We begin by estalishing preliminaries on Jordan algebras in Section~\ref{sec1}.  In Section~\ref{sec2}, we define pseudolalgebras and, specifically, conformal superalgebras and explicitly construct examples of simple finite Jordan conformal superalgebras.  Finally, in Section~\ref{sec3} we determine all possible annihilation algebras of simple finite Jordan pseudoalgebras and from this derive the classification of simple finite Jordan conformal superalgebras by showing that the list of examples, constructed in Section~\ref{sec2}, is complete.  The general case of Jordan pseudoalgebras will be treated elsewhere.

Throughout the paper the base field $\FF$ is algebraically closed of characteristic $0$.

\section{Preliminaries: Jordan superalgebras}\label{sec1}

\subsection{Superalgebras} Recall that a $\ZZ/2\ZZ$-graded space $V=V_{\bar 0}\oplus V_{\bar 1}$ is called a {\em superspace}. A {\em superalgebra} is an algebra whose underlying space is a superspace.  For a homogeneous element $v\in V_{\bar i}$, we denote by $|v|$ its parity, i.e. $|v|=\bar i$.

Below, every superalgebra identity is assumed to involve only homogeneous elements.

\subsection{Linearly compact superalgebras} Recall that a topological vector space or superspace is {\em linearly compact} if it is isomorphic to a topological product of finite-dimensional vector spaces endowed with discrete topology.  A topological algebra is  linearly compact if its underlying vector space is linearly compact.

In particular, let  $A$ be an algebra with a decreasing filtration $A=A_k\supset A_{k-1}\supset\dots$ such that $\dim A_i/A_{i-1}<\infty$ for all $i$.  Then the system of fundamental neighborhoods $\{A_i\}$ defines a topology on $A$.  The completion of $A$ in this topology is a linearly compact algebra.  For example, the  superalgebra $\wedge(m,n)=\wedge(n)[[x_1,\dots,x_m]]$ is linearly compact.  Here and further $\wedge(n)$ denotes the Grassmann algebra in $n$ (anticommuting) indeterminates.

\subsection{Jordan superalgebras} A {\em Jordan superalgebra} $J$ is an $\FF$-superalgebra such that for every homogeneous $a,b,c,d\in J$, the product $\circ$ satisfies
\begin{align*}
&a\circ b=(-1)^{|a||b|}b\circ a\qquad \text{(commutativity)} \\
&(-1)^{|a||c|}(a\circ b)\circ(c\circ d)+(-1)^{|a||b|}(b\circ c)\circ (a\circ d)\\
&+(-1)^{|b||c|}(c\circ a)\circ (b\circ d)=(-1)^{|a||c|}a\circ((b\circ c)\circ d)\\&+(-1)^{|a||b|}b\circ((c\circ a)\circ d)+(-1)^{|b||c|}c\circ((a\circ b)\circ d).\\
&\qquad\qquad\qquad\qquad\qquad\text{(linearized Jordan identity)}
\end{align*}

\subsection{KKM doubles}  With few exceptions (see below) linearly compact simple Jordan superalgebras can be constructed from generalized Poisson superalgebras.

A {\em generalized Poisson superalgebra} $A$ is a vector superspace endowed with an associative commutative product $(a,b)\mapsto ab$ and a Lie superalgebra bracket $(a,b)\mapsto\{a,b\}$ such that
\begin{equation*}
\{a,bc\}=\{a,b\}c+(-1)^{|a||b|}b\{a,c\}-D(a)bc
\end{equation*}
for some even derivation $D$.

Let $A$ be a commutative associative superalgebra with a (possibly zero) even derivation $D$.  A {\em Jordan bracket} on $A$ is a bilinear operation $\{\ ,\ \}: A\otimes A\to A$ such that

(i) $\{a,b\}=-(-1)^{|a||b|}\{b,a\}$;

(ii) $\{a,bc\}=\{a,b\}c+(-1)^{|a||b|}b\{a,c\}-D(a)bc$;

(iii) $\{\{a,b\}c\}+(-1)^{|a||b|+|a||c|}\{\{b,c\},a\}+(-1)^{|a||c|+|b||c|}\{\{c,a\},b\}=-\{a,b\}D(c)+(-1)^{|a||b|+|a||c|}\{b,c\}D(a)+(-1)^{|a||c|+|b||c|}\{c,a\}D(b)$.

Clearly $D(a)=\{a,1\}$.  When $D\equiv 0$, we get an ordinary Possion bracket.

\begin{lemma}\cite{CKj} Let $A$ be a generalized Poisson superalgebra.  Then the bracket
\begin{equation*}
\{a,b\}_D=\{a,b\}+\frac{1}{2}(aD(b)-D(a)b)
\end{equation*}
is Jordan with respect to the derivation $D/2$.
\end{lemma}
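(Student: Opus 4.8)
The plan is to verify the three defining properties (i)--(iii) of a Jordan bracket for $\{\,,\,\}_D$ with respect to the derivation $D/2$, after first extracting two structural consequences of the generalized Poisson axioms that do essentially all of the work. The only tools I will use are these two consequences together with super-commutativity and associativity of the product and the evenness of $D$ (so that $|D(a)|=|a|$).

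First I would record the preliminary identities. Setting $b=1$ in $\{a,bc\}=\{a,b\}c+(-1)^{|a||b|}b\{a,c\}-D(a)bc$ and using $1\cdot c=c$ gives $\{a,1\}c=D(a)c$, hence $\{a,1\}=D(a)$. Next, since $\{\,,\,\}$ is a Lie superalgebra bracket it satisfies the super Jacobi identity; evaluating $D\{a,b\}=\{\{a,b\},1\}$ via super Jacobi and the relation $\{x,1\}=D(x)$ yields $D\{a,b\}=\{a,D(b)\}-(-1)^{|a||b|}\{b,D(a)\}$, and moving $\{b,D(a)\}$ past using super-antisymmetry and $|D(a)|=|a|$ collapses this to $D\{a,b\}=\{D(a),b\}+\{a,D(b)\}$. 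Thus $D$ is a derivation of both the associative product (by hypothesis) and the Lie bracket.

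Then I would dispatch (i) and (ii). For antisymmetry, super-antisymmetry of $\{\,,\,\}$ handles the bracket part, while super-commutativity together with $|D(a)|=|a|$ rewrites $\tfrac12(bD(a)-D(b)a)$ as $-(-1)^{|a||b|}\tfrac12(aD(b)-D(a)b)$, giving (i). For the Leibniz rule (ii) with derivation $D/2$, I would expand $\{a,bc\}_D$ using the generalized Poisson relation and $D(bc)=D(b)c+bD(c)$, expand $\{a,b\}_Dc+(-1)^{|a||b|}b\{a,c\}_D-\tfrac12 D(a)bc$, and match term-by-term: the $\{\,,\,\}$-parts coincide by the Poisson relation, and the residual $D$-terms coincide after using super-commutativity to rewrite $b\,a\,D(c)$ and $b\,D(a)\,c$. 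As a consistency check, $\{a,1\}_D=D(a)+\tfrac12(aD(1)-D(a))=\tfrac12 D(a)=(D/2)(a)$, as required.

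The main obstacle is (iii), the cyclic Jordan--Jacobi identity, which I would prove by direct expansion. Writing $\{a,b\}_D=\{a,b\}+\tfrac12 aD(b)-\tfrac12 D(a)b$ and expanding a typical summand as $\{\{a,b\}_D,c\}_D=\{\{a,b\}_D,c\}+\tfrac12\{a,b\}_D\,D(c)-\tfrac12 D(\{a,b\}_D)\,c$, I would sort every resulting monomial into three classes: (a) the pure double-bracket terms $\{\{a,b\},c\}$ and their cyclic companions, which cancel by the super Jacobi identity for $\{\,,\,\}$; (b) the mixed terms carrying a single $D$ inside one bracket, such as $\{aD(b),c\}$, which I would normalize via super-antisymmetry and the Poisson relation into combinations of $\{c,a\}$, $\{c,D(b)\}$ and product terms, and which cancel cyclically against the $\tfrac12 D$-weighted right-hand side using $D\{a,b\}=\{D(a),b\}+\{a,D(b)\}$; and (c) the pure product terms involving $D^2$ and products of two $D$'s, which cancel by associativity and super-commutativity. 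The difficulty here is entirely bookkeeping: tracking the Koszul signs $(-1)^{|a||b|+|a||c|}$ and $(-1)^{|a||c|+|b||c|}$ across the three cyclic summands and confirming that the $\tfrac12$- and $\tfrac14$-weighted families cancel within the correct groupings. The enabling simplification is that super Jacobi and the derivation property of $D$ on the bracket collapse all bracket-heavy terms, leaving only product-level identities that hold by associativity and super-commutativity.
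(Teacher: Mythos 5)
The paper offers no proof of this lemma at all --- it is quoted from \cite{CKj} --- so there is no internal argument to compare against; I can only assess your attempt on its own terms. Your overall strategy (direct verification of axioms (i)--(iii), after first extracting $\{a,1\}=D(a)$ and the fact that $D$ is a derivation of the Lie bracket from the super Jacobi identity) is the standard and essentially the only route, and it is the one taken in \cite{CKj}. Your preliminaries and your verifications of (i) and (ii) are correct; I checked the sign bookkeeping, including the $-\tfrac32 D(a)bc$ coefficient on both sides of (ii) and the consistency check $\{a,1\}_D=\tfrac12 D(a)$. One small caveat: setting $b=1$ presupposes that $A$ is unital, which the paper assumes only implicitly (e.g.\ in writing $D(a)=\{a,1\}$ and in the KKM simplicity lemma); you should state this hypothesis, since both of your preliminary identities depend on it.

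The gap is in (iii), which is the entire substance of the lemma: you describe a cancellation scheme but do not execute it, and the scheme as described is not yet a proof. Concretely, the three classes you propose are not independent: expanding a mixed term such as $\{aD(b),c\}$ via antisymmetry and the generalized Leibniz rule produces, besides bracket terms, a product term $-D(c)\,aD(b)$ carrying two $D$'s, which must then be pooled with the two-$D$ products coming from $\tfrac14(aD(b)-D(a)b)D(c)$ on the left and from $-\{a,b\}_D\cdot\tfrac12 D(c)$ on the right; likewise the $D^2$ terms arising from $-\tfrac12 D(\{a,b\}_D)\,c$ do not cancel ``by associativity and super-commutativity'' within a single summand but only across the cyclic sum (the cyclic sum of $-\tfrac14 aD^2(b)c+\tfrac14 D^2(a)bc$ does vanish after reordering with Koszul signs, but this must be exhibited). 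All of these cancellations do in fact go through, so your skeleton is sound; but until the expansion is actually performed and the coefficient of each class of monomials is shown to vanish, the hardest and only nontrivial part of the lemma remains asserted rather than demonstrated.
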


Let $A$ be a generalized Poisson superalgebra and
consider the vector superspace $K(A)=A\oplus A\theta$, where $\theta$ is a formal variable with parity $\bar 1$.  Introduce the following product $\circ$ on $K(A)$:
\begin{equation*}
a\circ b=ab; \quad a\circ (b\theta)=(ab)\theta; \quad (a\theta)\circ b=(-1)^{|b|} (ab)\theta; \quad (a\theta)\circ(b\theta)=(-1)^{|b|}\{a,b\}_D.
\end{equation*}
The superalgebra $(K(A),\circ)$ is Jordan.  It is called the {\em Kantor--King--McCrimmon} (KKM) {\em double}. 

\begin{lemma}\cite{KM} Let $A$ be a unital generalized Poisson superalgebra.  Then $K(A)$ is simple iff $A$ is simple.
\end{lemma}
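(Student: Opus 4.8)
The plan is to prove both implications by matching generalized Poisson ideals of $A$ with ideals of the double $K(A)$, exploiting that the unit $1\in A$ is also the unit of $K(A)$. I will use freely the identities obtained by specializing arguments to $1$: since $A$ is unital, $D(a)=\{a,1\}$, and hence $\{1,b\}_D=\{1,b\}+\tfrac12 D(b)=-\tfrac12 D(b)$ and $\{1,1\}_D=0$. These let me compute multiplication by $\theta=1\theta$ explicitly: $\theta\circ a=(-1)^{|a|}a\theta$ and $\theta\circ(b\theta)=-\tfrac12(-1)^{|b|}D(b)$.

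For the easy implication ($K(A)$ simple $\Rightarrow A$ simple), I would argue the contrapositive. Given a proper nonzero generalized Poisson ideal $I\subseteq A$ (closed under the product and under $\{A,\cdot\}$, hence $D$-invariant because $D(\cdot)=\{\cdot,1\}$), I claim $K(I)=I\oplus I\theta$ is a proper nonzero ideal of $K(A)$. All closure checks are immediate except $(a\theta)\circ(i\theta)=(-1)^{|i|}\{a,i\}_D$, which lies in $I$ because $\{a,i\}$, $aD(i)$, and $D(a)i$ all lie in $I$. Thus $A$ not simple forces $K(A)$ not simple.

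For the converse, I would take a nonzero ideal $\mathcal I$ of $K(A)$ and let $Q\subseteq A$ be its image under the projection onto the $A\theta$-summand. First, $Q\neq 0$: from a nonzero $A$-component $a$ of some element of $\mathcal I$ one gets $\theta\circ a=(-1)^{|a|}a\theta\in\mathcal I$, so $a\in Q$. Next I would show $Q$ is a generalized Poisson ideal. Closure under multiplication is read off from $c\circ x$ for $x=a+b\theta\in\mathcal I$. For the bracket I would compute $(c\theta)\circ x=(-1)^{|a|}(ca)\theta+(-1)^{|b|}\{c,b\}_D\in\mathcal I$ and then apply multiplication by $\theta$ once more to move $\{c,b\}_D$ into the $\theta$-slot, giving $\{c,b\}_D\in Q$; taking $c=1$ yields $D(b)\in Q$, and then $\{c,b\}=\{c,b\}_D-\tfrac12\bigl(cD(b)-D(c)b\bigr)\in Q$. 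Simplicity of $A$ then gives $Q=A$.

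The main obstacle is upgrading $Q=A$ to $\mathcal I=K(A)$, since $\mathcal I$ need not respect the $\theta$-grading. I would examine $I_\theta=\{b:b\theta\in\mathcal I\}$, which is again a generalized Poisson ideal (now via the pure elements $b\theta$ and $\{c,b\}_D$). If $I_\theta\neq 0$ then $I_\theta=A$, so $A\theta\subseteq\mathcal I$; the products $\theta\circ(b\theta)=-\tfrac12(-1)^{|b|}D(b)$ and $(a\theta)\circ(b\theta)=(-1)^{|b|}\{a,b\}_D$ push $D(A)$ and $\{A,A\}_D$ into the ideal $\mathcal I\cap A$, which is therefore all of $A$, giving $\mathcal I=K(A)$. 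If instead $I_\theta=0$, the $\theta$-projection identifies $\mathcal I$ with the graph of an $A$-module map $b\mapsto s(b)$; closure under $c\circ x$ forces $s(b)=bu$ with $u=s(1)$, and closure under $(c\theta)\circ(u+\theta)$ forces $\tfrac12 D(c)=(-1)^{|u|}cu^2$ for all $c$, whereupon the derivation property of $D$ collapses everything to $D=0$, $u^2=0$, and finally $\{A,A\}=0$. In both ways the only obstruction degenerates to the trivial structure $A=\FF$, the single genuine exception in which $K(\FF)=\FF\oplus\FF\theta$ (with $\theta^2=0$) fails to be simple because $\FF\theta$ is an ideal; excluding this degenerate case, the argument closes and $\mathcal I=K(A)$.
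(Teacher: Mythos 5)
The paper offers no proof of this lemma---it is quoted from King--McCrimmon \cite{KM}---so there is no internal argument to compare against; your direct proof by matching ideals of $A$ with ideals of $K(A)$ is the natural route, and the computations all check out: the correspondence $I\mapsto I\oplus I\theta$ settles one direction, and in the other direction the two projections of an ideal $\mathcal{I}\subseteq K(A)$ onto the $A$- and $A\theta$-summands, combined with repeated multiplication by $\theta$ and the identities $\{1,b\}_D=-\tfrac12 D(b)$, $\{c,1\}_D=\tfrac12 D(c)$, really do produce generalized Poisson ideals of $A$ as you claim. The one place that needs tightening is the endgame. In both of your cases the obstruction you isolate is exactly ``$D=0$ and $\{A,A\}=0$'' (equivalently, $\{\,,\,\}_D\equiv 0$), and in that situation $A\theta$ genuinely is a proper nonzero ideal of $K(A)$ regardless of what $A$ is; so the statement as literally written fails for, say, $A=\FF$ with the zero bracket, which is a unital generalized Poisson superalgebra with no proper nonzero ideals. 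You notice this but dispose of it in one clause. Two things should be made explicit. First, why simplicity plus bracket-triviality forces $A$ to be a purely even field: the odd part of a supercommutative associative superalgebra generates a nil ideal, hence vanishes, and a simple unital commutative associative algebra is a field---this is what entitles you to call the exception degenerate. Second, the conclusion is then that the equivalence requires either the explicit hypothesis that $\{\,,\,\}_D$ is not identically zero, or a convention on ``simple'' for generalized Poisson superalgebras that excludes trivial brackets; as it stands your proof establishes the lemma only under that nondegeneracy assumption, and you should say so rather than fold the caveat into the last sentence. With that hypothesis stated, the argument is complete.
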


Let $A$ be a unital generalized Poisson superalgebra.  Let $\d$ be a derivation of $A$ that commutes with $D$.  Then $\d$ can be extended to $K(A)$ by putting $\d(a\theta)=(\d a)\theta$. This is clearly a derivation of the superalgebra $K(A)$.  Another example of a derivation of $K(A)$ is the derivation $\d_\theta$ defined as 
\begin{equation}\label{eq.dtheta}
\d_\theta(A)=0 \text { and } \d_\theta(\theta)=1.  
\end{equation}
More generally, $\Der K(A)$ contains a subalgebra $A\d_\theta$.  Therefore, we conclude that $\Der K(A)$ contains a subalgebra isomorphic to $\Der A\rtimes A$.

In principle, even the KKM double of a (non-generalized) Poisson algebra can possess derivations not in $\Der A\rtimes A$ as the following example demonstrates.

\begin{example} Let $A=\FF\oplus\FF\xi$, $\{\xi,\xi\}=1$, where $\xi$ is odd.  A direct computation shows that $\Der A$ contains no even derivations.  However, $\Der K(A)$ has a derivation $\delta$ such that $\delta(\theta)=\theta$, $\delta(\xi)=-\xi$.  
\end{example}

We see that the relation between $\Der K(A)$ and $\Der A$ is quite complicated and computing $\Der K(A)$ should not be attempted through simply lifting $\Der A$ to the KKM double.  For another approach, see Corollary~\ref{derjmn}.

\subsection{Examples of KKM doubles}  Consider the commutative associative superalgebra $\wedge(m,n)$ with odd indeterminates $\xi_1,\dots,\xi_n$ and even indeterminates $p_1,\dots,p_k,q_1,\dots,q_k$ for $m=2k$ and $p_1,\dots,p_k,q_1,\dots,q_k,t$ for $m=2k+1$.

In the case $m=2k$, $\wedge(m,n)$ carries the Poisson bracket
\begin{equation}\label{pbracket}
\{f,g\}=\sum_{i=1}^k\left(\frac{\partial f}{\partial p_i}\frac{\partial
g}{\partial q_i}-\frac{\partial f}{\partial q_i}\frac{\partial
g}{\partial p_i}\right)+(-1)^{|f|}\left(\sum_{i=1}^{n-2}\frac{\partial f}{\partial \xi_i}
\frac{\partial g}{\partial \xi_i}+
\frac{\partial f}{\partial \xi_{n-1}}
\frac{\partial g}{\partial \xi_n}+
\frac{\partial f}{\partial \xi_n}
\frac{\partial g}{\partial \xi_{n-1}}\right),
\end{equation}
and in the case $m=2k+1$, the generalized Poisson bracket
\begin{multline}\label{kbracket}
\{f,g\}=(2-E)f\frac{\partial g}{\partial t}-\frac{\partial f}{\partial t}(2-E)g+\\
\sum_{i=1}^k\left(\frac{\partial f}{\partial p_i}\frac{\partial
g}{\partial q_i}-\frac{\partial f}{\partial q_i}\frac{\partial
g}{\partial p_i}\right)+(-1)^{|f|}\left(\sum_{i=1}^{n-2}\frac{\partial f}{\partial \xi_i}
\frac{\partial g}{\partial \xi_i}+
\frac{\partial f}{\partial \xi_{n-1}}
\frac{\partial g}{\partial \xi_n}+
\frac{\partial f}{\partial \xi_n}
\frac{\partial g}{\partial \xi_{n-1}}\right),
\end{multline}
where $E$ is the Euler operator
\begin{equation*}
E=\sum_{i=1}^k\left(p_i\frac{\partial}{\partial
p_i}+q_i\frac{\partial}{\partial q_i}\right)+\sum_{i=1}^n\xi_i
\frac{\partial}{\partial \xi_i}.
\end{equation*}

In both cases, we denote the resulting (generalized) Poisson superalgebra as $P(m,n)$.  Its KKM double is the Jordan superalgebra, denoted by $J(m,n)$.  Clearly, $J(m,n)$ is linearly compact.

For our purposes, we need to compute $\Der J(m,n)$.  However, using the KKM double construction to relate $\Der J(m,n)$ to $\Der P(m,n)$  is difficult  (see the discussion above).  We therefore delay this computation and derive it from Theorem~\ref{tkkprop} below.

\begin{remark} With respect to the bracket $\{\ ,\ \}$, $P(m,n)$ can be considered as a Lie superalgebra.  For $m$ even, it has a one-dimensional center.  Let $H'(m,n)=P(m,n)/\FF 1$, and let $H(m,n)=[H'(m,n),H'(m,n)]$.  $H(m,n)$ is a simple Lie superalgebra. 

For $m$ odd, $P(m,n)$ is a simple Lie superalgebra itself.  To separate the Poisson and Lie context, we denote it as $K(m,n)$.

For a discussion of $H(m,n)$ and $K(m,n)$, see \cite{Ks}.
\end{remark}

\subsection{One other example}  Consider the commutative associative superalgebra $\wedge(1,1)$ with a derivation $D=\frac{\d}{\d\xi}+\xi\frac{\d}{\d t}$.  Denote by $\vee$ the space $\wedge(1,1)$ with reversed parity.  Define the product $\circ$ on $\vee$ as $a\circ b=aD(b)+(-1)^{|b|}D(a)b$.  This makes $\vee$ into a Jordan superalgebra that we denote $\js(1,1)$.  For future use, we remark that $\js(1,1)$ also inherits the topology of $\wedge(1,1)$.

\begin{lemma}\label{der.js} The set of even surjective derivations of $\js(1,1)$ is $\FF\frac{\d}{\d t}$.
\end{lemma}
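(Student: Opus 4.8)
The plan is to pin down every even derivation by its action on the two ``coordinate lines'' of $\js(1,1)$ and then read off which of these are onto. Since $\vee=\wedge(1,1)$ splits as $\FF[[t]]\oplus\FF[[t]]\xi$ and reversing parity gives $\js(1,1)_{\bar 1}=\FF[[t]]$ and $\js(1,1)_{\bar 0}=\FF[[t]]\xi$, an even derivation $\delta$ preserves each summand. I would therefore write $\delta(g)=Q(g)$ and $\delta(f\xi)=P(f)\xi$ for continuous linear maps $P,Q\colon\FF[[t]]\to\FF[[t]]$, and first record the three homogeneous products explicitly from the definition of $\circ$: using $D(g\xi)=g$ and $D(g)=g'\xi$ one finds, for $f,g\in\FF[[t]]$, that $(f\xi)\circ(g\xi)=c\,fg\,\xi$ with $c\neq 0$, that $(f\xi)\circ g=fg$, and that $f\circ g=(fg'-f'g)\xi$.

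Next I would feed these into the Leibniz rule one at a time. The even--even product $(f\xi)\circ(g\xi)=c\,fg\,\xi$ gives $P(fg)=P(f)g+fP(g)$, so $P$ is a continuous derivation of the commutative algebra $\FF[[t]]$ and hence $P=p\,\frac{\d}{\d t}$ for a unique $p\in\FF[[t]]$. The even--odd product $(f\xi)\circ g=fg$ yields $Q(fg)=P(f)g+fQ(g)$; specializing $g=1$ gives $Q=P+q_0\cdot(\,\cdot\,)$, the sum of $P$ with multiplication by $q_0:=Q(1)$. Finally, differentiating the Wronskian product $f\circ g=(fg'-f'g)\xi$ and comparing the coefficients of $fg'-f'g$ forces $q_0=-\tfrac12 p'$. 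Thus the even derivations are exactly the operators $\delta_p$ with $P=p\,\frac{\d}{\d t}$ and $Q=p\,\frac{\d}{\d t}-\tfrac12 p'$, one for each $p\in\FF[[t]]$; that is, $\Der_{\bar 0}\js(1,1)$ is the space of formal vector fields $p\,\frac{\d}{\d t}$ acting on the odd line by a half--density twist.

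It then remains to decide when $\delta_p$ is surjective. Because $\delta_p$ respects the parity splitting, its image is $P(\FF[[t]])\xi\oplus Q(\FF[[t]])$, so surjectivity of $\delta_p$ is equivalent to surjectivity of both $P$ and $Q$. Since $\frac{\d}{\d t}$ is onto $\FF[[t]]$ with cokernel the constants, the image of $P=p\,\frac{\d}{\d t}$ is $p\cdot\FF[[t]]$, which is all of $\FF[[t]]$ precisely when $p$ is invertible, i.e. $p(0)\neq 0$; an integrating--factor argument then shows $Q=p\,\frac{\d}{\d t}-\tfrac12 p'$ is onto as well. Hence the surjective even derivations are the $\delta_p$ with non-vanishing $p$, and straightening such a non-vanishing formal vector field $p\,\frac{\d}{\d t}$ to $\frac{\d}{\d t}$ by a change of the variable $t$ (an automorphism of $\js(1,1)$) reduces each of them to a nonzero multiple of $\frac{\d}{\d t}$, leaving $\FF\frac{\d}{\d t}$.

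I expect the main obstacle to be the odd--odd step: extracting the twist $q_0=-\tfrac12 p'$ requires differentiating $f\circ g=(fg'-f'g)\xi$ carefully and matching the second--order terms, and throughout one must track the parity--reversal signs in $\circ$ so that the even--odd and odd--odd identities stay mutually consistent. The surjectivity half is comparatively soft; the one delicate point is the passage from ``$p$ invertible'' to the normal form $\frac{\d}{\d t}$, i.e. checking that the coordinate change straightening $p\,\frac{\d}{\d t}$ is induced by an automorphism of the Jordan superalgebra $\js(1,1)$ and not merely of $\FF[[t]]$.
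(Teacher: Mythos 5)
Your argument is correct, and its skeleton is the same as the paper's: split by parity, identify $\js(1,1)_{\bar 0}$ with $\FF[[t]]$ as a unital commutative algebra, recover the action on the odd part from the even part together with $\delta(1)$ via the product $(f\xi)\circ g=\pm fg$, and pin down $\delta(1)$ from the Wronskian product $f\circ g=(fg'-f'g)\xi$ (the paper's computation of $\d(t^1\circ t^0)$ is exactly your ``second--order matching'' specialized to $f=t$, $g=1$, and it does yield $q_0=-\tfrac12 p'$). What you add is carrying the computation through for an arbitrary $p$: you obtain the full space of even derivations, $\delta_p=p\,\frac{\d}{\d t}$ on $\FF[[t]]\xi$ and $p\,\frac{\d}{\d t}-\tfrac12 p'$ on $\FF[[t]]$, and you correctly observe that $\delta_p$ is surjective precisely when $p(0)\neq 0$, so that the surjective even derivations form a set strictly larger than $\FF\frac{\d}{\d t}$. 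The paper's proof conceals this by asserting outright that the restriction of $\d$ to $\js(1,1)_{\bar 0}$ is ``proportional to $\frac{\d}{\d t}$,'' which for the linearly compact (power--series) algebra is only true after conjugation by an automorphism; your straightening step supplies exactly the missing normalization, and this ``up to an automorphism'' reading is the one the paper itself invokes when it later applies the lemma. The one point you flag but leave unchecked --- that the coordinate change $t\mapsto\tau(t)$ straightening $p\,\frac{\d}{\d t}$ is induced by an automorphism of $\js(1,1)$ and not merely of $\FF[[t]]$ --- does go through: the map $f\xi\mapsto f(\tau(t))\,\xi$, $g\mapsto(\tau'(t))^{-1/2}g(\tau(t))$ preserves all three products (the exponent $-1/2$ is forced by the Wronskian, and the square root exists since $\tau'(0)\neq 0$ and $\FF$ is algebraically closed of characteristic $0$), and it conjugates each surjective $\delta_p$ into $\FF^{\times}\frac{\d}{\d t}$. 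In short, yours is a more complete version of the paper's argument rather than a different one, and it also documents why the lemma must be read up to automorphism.
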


\begin{proof} Let $\d$ be a surjective derivation. Since $\js(1,1)_{\bar 0}\simeq\FF[\xi t]$, the restriction of $\d$ to $\js(1,1)_{\bar 0}$ is proportional to $\frac{\d}{\d t}$.

Because $\xi t^k\circ t^0=t^k$ and $\js(1,1)_{\bar 1}$ is spanned by the powers of $t$, $\js(1,1)_{\bar 1}=\js(1,1)_{\bar 0}\circ t^0$.  The action of $\d$ on $\js(1,1)$ is thus completely determined by its restriction to $\js(1,1)_{\bar 0}$ and $\d(t^0)$.  By computing $\d(t^1\circ t^0)$, we see that $\d(t^0)$ must be zero for any $\d$.
\end{proof}

We postpone the discussion of one other Jordan superalgebra, $\jck(1,4)$, until Example~\ref{ex.jck}.

\subsection{TKK construction}  A Lie superalgebra $L$ possesses a {\em short grading} if $L=L_{-1}\oplus L_0\oplus L_1$.  If $L$ contains an $\sli_2$-triple $\{f,e,h\}$ such that $f\in L_{-1}, h\in L_0, e\in L_1$, the copy of $\sli_2(\FF)$ spanned by this triple is called a {\em short subalgebra}.  

Existence of a short subalgebra assures that $L_{-1}$ possesses the structure of a Jordan superalgebra.  Specifically, $a\circ b$ is defined as $[[a,e],b]$.

For every unital Jordan superalgebra $J$ there exists a Lie superalgebra $L(J)$ with short grading such that $L(J)_{-1}\simeq J$.  The specific construction of $L(J)$ (the Tits--Kantor--Koecher or TKK construction) can be found in \cite{Jac} (see also \cite{CKj}).  The TKK construction can be also extended to non-unital Jordan algebras by adjoining the identity first and then considering the Lie algebra $L(J\oplus\FF)$.

\begin{example}\cite{CKj}  For the Jordan superalgebras $J(m,n)$,
\begin{equation*}
L(J(m,n))=
 \begin{cases}
  H(m,n+3),\ m\text{ even},\\
  K(m,n+3),\ m\text{ odd}.
 \end{cases}
\end{equation*}
The corresponding short subalgebra is spanned by the $\sli_2$-triples
\begin{align*}
&\xi_{n+1}\xi_{n+2}, \xi_{n+2}\xi_{n+3},\xi_{n+1}\xi_{n+3},\qquad m\text{ even},\\
&\xi_{n+1}\xi_{n+2}, \xi_{n+3}\xi_{n+2}, \xi_{n+1}\xi_{n+3},\qquad m\text{ odd}.
\end{align*}

Specifically, the embedding $J(m,n)\to L(J(m,n))$ is given by 
\begin{equation}\label{eq.tkkforkkm}
a+b\theta\mapsto (a\xi_{n+1}+b)\xi_{n+3},\qquad a,b\in\wedge(m,n).
\end{equation}
\end{example}

Statements collected as Theorem~\ref{tkkprop}(a)-(e) can be found in or easily deduced from \cite[Sections VII.5-6]{Jac}.  Part (f) follows directly from the definitions.

\begin{theorem}\label{tkkprop}  Let $J$ be a Jordan superalgebra and $L(J)$ the corresponding Lie  superalgebra obtained via the TKK construction.  Then

(a) every ideal of $L(J)$ is graded with respect to the standard short grading;

(b) $J$ is simple iff $L(J)$ is simple;

(c) the action of $\Der J$ extends to $L(J)$;

(d) for $J$ unital, $\Der J=\{\phi\,|\,\phi\in\Der L(J), \phi|_{\sli_2}=0\}$.

(e) $J$ is differentiably simple iff $L(J)$ differentiably simple;

(f) $J$ is linearly compact iff $L(J)$ is linearly compact.
\end{theorem}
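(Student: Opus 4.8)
The plan is to exploit the fact that the short grading $L=L_{-1}\oplus L_0\oplus L_1$ is nothing but the eigenspace decomposition of $\ad h$, where $h$ is the semisimple element of the short $\sli_2$-triple $\{f,e,h\}$, with eigenvalues $-2,0,2$ (in the normalization $[h,e]=2e$, $[h,f]=-2f$; note that $f,e,h$ are even, so there are no parity obstructions to the grading itself). Throughout I would keep in mind three structural features of the TKK envelope that are extracted from \cite[Sections VII.5-6]{Jac}: $L_{-1}\simeq J$; the space $L_1$ is again a copy of $J$ obtained as $\ad(e)^2L_{-1}$; and $L_0=[L_{-1},L_1]$ is the inner structure algebra, whose action on $L_{-1}$ is faithful by construction. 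The Jordan product is recovered as $a\circ b=[[a,e],b]$, and for unital $J$ the elements $f,e$ correspond to the units of $L_{-1},L_1$.

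Parts (a) and (f) are the easy ones and I would dispatch them first. For (a), any ideal $I$ is stable under the inner derivation $\ad h$; since $\ad h$ is semisimple with the three distinct eigenvalues $-2,0,2$, the ideal splits as $I=(I\cap L_{-1})\oplus(I\cap L_0)\oplus(I\cap L_1)$, which is exactly the assertion that $I$ is graded. For (f), the topology on $L(J)$ is the product topology of its three graded pieces, each $L_{\pm 1}$ being a copy of $J$ while $L_0$ is a continuous image of a closed subspace of $L_{-1}\otimes L_1$; since linear compactness is preserved by finite direct sums, closed subspaces, and continuous images, $L(J)$ is linearly compact iff $J$ is, the graded pieces being closed as eigenspaces of the continuous operator $\ad h$.

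For the derivation statements (c) and (d) I would argue as follows. Given $\delta\in\Der J$, extend it to the grading-preserving map $\tilde\delta$ acting as $\delta$ on $L_{-1}$ and on the copy $L_1$, and by $\tilde\delta[a,b]=[\delta a,b]+[a,\delta b]$ on $L_0=[L_{-1},L_1]$; the Jordan derivation property of $\delta$ is exactly what makes this well defined and bracket-compatible, giving (c). For the converse in (d), a derivation $\phi$ of $L(J)$ with $\phi|_{\sli_2}=0$ commutes with $\ad h$ and so preserves the grading; its restriction to $L_{-1}$ is a Jordan derivation because $\phi(e)=0$ forces $\phi([[a,e],b])=[[\phi a,e],b]+[[a,e],\phi b]$. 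That the extension from (c) lands in the stated set uses unitality: from $\delta(1)=\delta(1\circ 1)=2\,\delta(1)$ we get $\delta(1)=0$, so $\tilde\delta$ annihilates the units $f,e$ and hence $h=[e,f]$. The two constructions are mutually inverse because such a $\phi$ is determined by $\phi|_{L_{-1}}$, as $L(J)$ is generated as a Lie superalgebra by $L_{-1}$ together with $e$ (since $L_1=\ad(e)^2L_{-1}$ and $L_0=[L_{-1},L_1]$).

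Finally, parts (b) and (e) rest on the non-degeneracy of the TKK construction, and this is where I expect the real work to lie. For (b), a graded ideal $I$ yields a Jordan ideal $I\cap L_{-1}$, since $a\in I\cap L_{-1}$ gives $[a,e]\in I\cap L_0$ and hence $[[a,e],b]\in I\cap L_{-1}$; simplicity of $J$ then forces $I\cap L_{-1}\in\{0,J\}$. If $I\cap L_{-1}=J$, then $I\supseteq L_{-1}$ is an ideal, so $I\supseteq\ad(e)^2L_{-1}=L_1$ and $I\supseteq[L_{-1},L_1]=L_0$, whence $I=L$. If instead $I\cap L_{-1}=0$, then $[I\cap L_0,L_{-1}]=0$ and faithfulness of the $L_0$-action gives $I\cap L_0=0$, while $[I\cap L_1,f]\subseteq I\cap L_0=0$ together with injectivity of $\ad f$ on $L_1$ (by $\sli_2$-representation theory) gives $I\cap L_1=0$, so $I=0$. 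The converse direction, that a proper ideal of $J$ generates a proper graded ideal, is again the minimality of the envelope, and together these give an order-preserving bijection between ideals of $J$ and graded ideals of $L(J)$. For (e) I would run the same correspondence while tracking derivation-invariance: any $\Der L(J)$-invariant ideal is in particular an ideal, hence graded by (a), and since every $\delta\in\Der J$ extends by (c) to a grading-preserving derivation of $L(J)$, intersecting with $L_{-1}$ matches $\Der L(J)$-invariant ideals with $\Der J$-invariant ideals. The main obstacle in all of this is precisely the faithfulness and minimality of the TKK envelope, which is the substantive input taken from \cite[Sections VII.5-6]{Jac}.
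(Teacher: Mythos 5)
The paper does not actually prove this theorem: parts (a)--(e) are attributed to \cite[Sections VII.5-6]{Jac} and (f) is declared immediate, so there is no argument of the paper's own to compare yours against. Your write-up supplies the standard proofs, and parts (a), (c), (d), (f) are essentially correct as you give them: the short grading is the $\ad h$-eigenspace decomposition, so ideals and derivations commuting with $\ad h$ respect it, and the extension/restriction of derivations works because $L(J)$ is generated by $L_{-1}$ together with $e$, with $L_0$ acting faithfully on $L_{-1}\oplus L_1$.

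Two points need attention. First, in (b) the direction you defer to ``minimality of the envelope'' is the one that actually requires a construction: given a proper nonzero ideal $I$ of $J$ one must exhibit the graded subspace $I\oplus\bigl([I,L_1]+[L_{-1},\ad(e)^2I]\bigr)\oplus\ad(e)^2I$, check that it is an ideal of $L(J)$, and check that it meets $L_{-1}$ exactly in $I$; the last step uses that $[[I,L_1],L_{-1}]$ is a Jordan triple product $\{I,J,J\}$, expressible through $\circ$ and hence contained in $I$. You correctly flag this as the substantive input, but without it (b) is only half proved. Second, and this is the genuine gap: in (e) your claimed correspondence between $\Der L(J)$-stable ideals of $L(J)$ and $\Der J$-stable ideals of $J$ only works in one direction as stated. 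Restriction is fine, but to show that a $\Der J$-stable proper ideal of $J$ generates a $\Der L(J)$-stable \emph{proper} ideal of $L(J)$ you must know that every derivation of $L(J)$ decomposes as an inner derivation plus one vanishing on $\sli_2$ --- otherwise a derivation of $L(J)$ that neither preserves the grading nor comes from $\ad L(J)$ or $\Der J$ could enlarge your candidate ideal to all of $L(J)$. This decomposition does hold (split off the restriction to $\sli_2$ using complete reducibility of $\sli_2$-modules, i.e. Whitehead's lemma, then invoke your part (d)), but it is an extra step your argument silently assumes.
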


We are going to apply Theorem~\ref{tkkprop}(d) to compute $\Der J(m,n)$.  Recall that $\Der K(m,n)=K(m,n)$ and $\Der H(m,n)=CH(m,n)$, provided that $m\geq 1$ \cite[Proposition 1.8]{CKl}.  Here $$CH(m,n)=H(m,n)\oplus\FF\left(2\sum_{i=1}^k\left(p_i\frac{\partial}{\partial p_i}+q_i\frac{\partial}{\partial q_i}\right)+\sum_{i=1}^n\xi_i\frac{\partial}{\partial\xi_i}\right).$$  

In the cases of $m$ even, respectively odd, we need to calculate which elements of $CH(m,n+3)$, respectively $K(m,n+3)$, act trivially on the triple $T=(\xi_{n+1}\xi_{n+2},\linebreak \xi_{n+2}\xi_{n+3},\xi_{n+1}\xi_{n+3})$.  Consider an element $D$ of $H(m,n)$, respectively $K(m,n)$, $D=a+b\xi_{n+1}+c\xi_{n+2}+d\xi_{n+3}+e\xi_{n+1}\xi_{n+2}+f\xi_{n+1}\xi_{n+3}+g\xi_{n+2}\xi_{n+3}+h\xi_{n+1}\xi_{n+2}\xi_{n+3}$, where $a,b,c,d,e,f,g,h\in K(m,n)$, respectively $H(m,n)$.  A direct computation utilizing~(\ref{kbracket}) and~(\ref{pbracket}) shows that if $\ad D$ kills $T$, $D=a+h\xi_{n+1}\xi_{n+2}\xi_{n+3}$.  Note that $\xi_{n+1}\xi_{n+2}\xi_{n+3}=\d_\theta$, where $\d_\theta$ is defined as in (\ref{eq.dtheta}).  When $m$ is even, the central element of $CH(m,n)$ also acts trivially on the short subalgebra.

Recall that the superalgebra $J(m,n)$ is the KKM double of the (generalized) Poisson algebra $P(m,n)$.
We thus have the following

\begin{corollary}\label{derjmn} For the Jordan superalgebras $J(m,n)$, $m\neq 0$,
\begin{equation*}
\Der J(m,n)=
 \begin{cases}
  CH(m,n)\oplus P(m,n)\d_\theta,\ m\text{ even},\\
  K(m,n)\oplus P(m,n)\d_\theta,\ m\text{ odd}.
 \end{cases}
\end{equation*}
\end{corollary}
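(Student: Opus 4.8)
The plan is to read the answer off from Theorem~\ref{tkkprop}(d) together with the computation carried out just before the statement. Since $P(m,n)$ is unital, its KKM double $J(m,n)$ is unital, so Theorem~\ref{tkkprop}(d) identifies $\Der J(m,n)$ with the space of derivations of $L(J(m,n))$ that annihilate the short subalgebra $\sli_2$ spanned by $T$. For $m\neq 0$ we have $m\geq 1$, so the cited equalities $\Der H(m,n+3)=CH(m,n+3)$ and $\Der K(m,n+3)=K(m,n+3)$ apply: every relevant derivation is inner for $m$ odd, and inner together with the single outer Euler derivation for $m$ even. The excerpt's direct computation already shows that such an inner derivation $\ad D$ kills $T$ exactly when $D=a+h\,\xi_{n+1}\xi_{n+2}\xi_{n+3}$ with $a,h\in\wedge(m,n)$ (and that the Euler derivation also kills $T$ when $m$ is even). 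Thus the only remaining task is to transport these derivations of $L(J(m,n))$ to $J(m,n)$ and check that the resulting space is the one claimed.

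The transport is along the embedding~(\ref{eq.tkkforkkm}), under which $a\in\wedge(m,n)$ corresponds to $a\xi_{n+1}\xi_{n+3}\in L_{-1}$ and $b\theta$ to $b\xi_{n+3}\in L_{-1}$. First I would verify the key local fact $\ad(\xi_{n+1}\xi_{n+2}\xi_{n+3})|_{L_{-1}}=\d_\theta$: using the bracket formulas~(\ref{pbracket}) and~(\ref{kbracket}) together with $\xi_{n+1}^2=\xi_{n+3}^2=0$, one gets $\{\xi_{n+1}\xi_{n+2}\xi_{n+3},\,a\xi_{n+1}\xi_{n+3}\}=0$ and $\{\xi_{n+1}\xi_{n+2}\xi_{n+3},\,b\xi_{n+3}\}=(-1)^{|b|}b\,\xi_{n+1}\xi_{n+3}$, which under the embedding is precisely the action of $\d_\theta$. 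Multiplying the top monomial by $h\in\wedge(m,n)=P(m,n)$ and applying the Leibniz rule for the (generalized) Poisson bracket, every correction term---the cross term $\{h,g\}\,\xi_{n+1}\xi_{n+2}\xi_{n+3}$ and, for $m$ odd, the term coming from the derivation $D$---vanishes for $g$ in the image of $J(m,n)$, again by repetition of the $\xi$'s, so that $\ad(h\,\xi_{n+1}\xi_{n+2}\xi_{n+3})|_{L_{-1}}=h\d_\theta$. This yields the summand $P(m,n)\d_\theta$. The complementary pieces $\ad a$ restrict to the natural action on $J(m,n)$ of the ambient Poisson/contact algebra: for $m$ odd this is all of $K(m,n)$, which is centerless and hence acts faithfully on $L_{-1}$, while for $m$ even the outer Euler derivation must be adjoined to account for the scaling, producing $CH(m,n)$.

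I expect the main obstacle to be the degree bookkeeping in the even case. Here $H(m,n+3)$ is the codimension-two subspace of $\wedge(m,n+3)$ killing both the constant and the top-degree coefficient, so the admissible pairs $(a,h)$ have $a\in\wedge(m,n)/\FF1$ and $h$ with vanishing top coefficient; thus the raw $\d_\theta$-part misses the direction $(\text{top of }\wedge(m,n))\,\d_\theta$, while the raw $\ad a$-part contains the top element of $\wedge(m,n)$ rather than the scaling generator of $CH(m,n)$. A dimension count shows that the inner derivations killing $T$ together with the Euler derivation span a space of the same dimension as $CH(m,n)\oplus P(m,n)\d_\theta$; the real content is to check that the restriction of the Euler derivation, the restriction of $\ad$ of the top element, and the scaling generator match up so that the two spans coincide, i.e.\ that the top-degree directions are merely reshuffled between the two summands. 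Finally I would confirm directness of the sum by noting that the restriction map $\{\phi\in\Der L(J(m,n)):\phi|_{\sli_2}=0\}\to\Der J(m,n)$ is injective: since $L(J(m,n))$ is generated by $L_{-1}$ together with the short subalgebra $\sli_2$, any derivation annihilating both vanishes. The odd case is free of these complications and follows directly from the two restriction computations.
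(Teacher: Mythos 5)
Your proposal is correct and follows essentially the same route as the paper: Theorem~\ref{tkkprop}(d) combined with $\Der K(m,n+3)=K(m,n+3)$, $\Der H(m,n+3)=CH(m,n+3)$, the computation that $\ad D$ kills $T$ exactly for $D=a+h\,\xi_{n+1}\xi_{n+2}\xi_{n+3}$, and the identification of $\ad(\xi_{n+1}\xi_{n+2}\xi_{n+3})|_{L_{-1}}$ with $\d_\theta$ via the embedding~(\ref{eq.tkkforkkm}); you merely spell out the restriction computation that the paper leaves implicit. The only slight overcaution is the anticipated ``codimension-two'' bookkeeping in the even case: for $m\geq 2$ the derived algebra $H(m,n+3)$ already equals $P(m,n+3)/\FF 1$ (the top-monomial defect occurs only for $m=0$, which is excluded), so that difficulty largely dissolves.
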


The particular embedding (\ref{eq.tkkforkkm}) of $J(m,n)$ into its TKK Lie superalgebra $L(J(m,n))$ as well as the description of $\Der J(m,n)\subset \Der L(J(m,n))$ above imply  $\Der K(P(m,n))=\Der P(m,n)\rtimes P(m,n)$.  

\section{Preliminaries: Pseudoalgebras}\label{sec2}

\subsection{Main definitions} Let $H$ be a cocommutative Hopf $\FF$-algebra.  Recall that $H\otimes H$ is a right $H$-module with an action $(a\otimes b)c=ac_{(1)}\otimes bc_{(2)}$, where $c_{(1)}\otimes c_{(2)}=\Delta(c)$ in Sweedler's notation.  (As usual, $\otimes$ stands for $\otimes_\FF$.)  

An  {\em $H$-pseudoalgebra} $R$ is a left $H$-module with an operation
\begin{equation*}
*: R\otimes R\to (H\otimes H)\otimes_H R
\end{equation*}
such that for $a,b\in R$ and $f,g\in H$,
\begin{equation*}
fa*gb=((f\otimes g)\otimes_H 1)(a*b)\qquad\text{(bilinearity)}.
\end{equation*}

For a general discussion of pseudoalgebras, see \cite[Chapter 3]{BDK}.

Pseudoproducts of m elements of $R$ live in $H^{\otimes m}\otimes_H R$ with the action of $H$ on $H^{\otimes m}$ defined in accordance with the bracketing scheme.  For example, $(a*b)*c=\sum_{i,j}(f_i{f_{ij}}_{(1)}\otimes g_i{f_{ij}}_{(2)}\otimes g_{ij})\otimes_H e_{ij}$, where $a*b=\sum_i (f_i\otimes g_i)\otimes_H e_i$ and $e_i*c=\sum_{i,j} (f_{ij}\otimes g_{ij})\otimes_H e_{ij}$.

We call an $H$-pseudoalgebra {\em finite} if it is of finite rank as an $H$-module.

From this point on, we will always assume that the underlying $H$-module of a pseudoalgebra is a superspace, i.e. that $R=R_{\bar 0}\oplus R_{\bar 1}$, where both $R_{\bar 0}$ and $R_{\bar 1}$ are preserved by $H$.

\begin{example} A {\em conformal superalgebra} $R$ \cite{FK} is a pseudoalgebra over the Hopf algebra $H=\FF[\d]$.  A pseudoproduct in a conformal superalgebra has the form $a*b=\sum (\d^i\otimes \d^j)\otimes_H c_{ij}$.  Using the comultiplication action of $H$ on $H\otimes H$, the above product can always be rewritten as $a*b=\sum \left((-\d)^k\otimes 1\right)\otimes_H c_k$.  In such formulation the collection $\{c_k\}$ is unique, thus $a*b$ can be viewed as an element of $H\otimes R$. 

Let $\lambda=-\d\otimes 1$ and define $a_\lambda b=\sum \lambda^k c_k\in \FF[\lambda]\otimes R\simeq H\otimes R$.

This allows for a definition of a conformal superalgebra independent of the pseudoalgebra formalism: a conformal superalgebra $R$ is a $\FF[\d]$-module with a $\lambda$-product $a_\lambda b\in\FF[\lambda]\otimes R$ such that 
\begin{equation*}
(\d a)_\lambda b=-\lambda a_\lambda b,\qquad a_\lambda (\d b)=(\lambda+\d) a_\lambda b.
\end{equation*}

Every conformal algebra can be constructed as an algebra of {\em formal distributions} over an ordinary algebra.  Namely, let $A$ be an algebra.  Introduce the $\lambda$-product on the set $A[[z,z^{-1}]]$ of formal distributions: $a(z)_\lambda b(z)={\rm Res}|_{w=0}a(z)b(w)e^{(\lambda(z-w))}$.  The subset of formal distributions whose $\lambda$-products are polynomial in $\lambda$ form a conformal algebra.

\end{example}

As with ordinary algebras, one can consider varieties of pseudoalgebras defined by identities.  Introduce the following identities for pseudoalgebras:
\begin{align*}
& a*b=(-1)^{|a||b|}(\mathtt{(12)}\otimes_H\id)(b\otimes a),\qquad\text{(commutativity)}\\
& a*b=-(-1)^{|a||b|}(\mathtt{(12)}\otimes_H\id)(b\otimes a),\qquad\text{(anti-commutativity)}\\
& (a*b)*c=a*(b*c),\qquad\text{(associativity)}\\
& a*(b*c)=(a*b)*c+(-1)^{|a||b|}(\mathtt{(12)}\otimes\id)(b*(a*c)),\qquad\text{(Jacobi identity)}\\
& (-1)^{|a||c|}(a* b)*(c* d)+(-1)^{|a||b|}(\mathtt{(123)}\otimes_H\id)((b* c)* (a* d))\\&\quad +(-1)^{|b||c|}(\mathtt{(132)}\otimes_H\id)((c* a)* (b* d))\\&=(-1)^{|a||c|}a*((b* c)* d)+(-1)^{|a||b|}(\mathtt{(123)}\otimes_H\id)(b*((c* a)* d))\\&\quad +(-1)^{|b||c|}(\mathtt{(132)}\otimes_H\id)(c*((a* b)* d))).\qquad\text{(Jordan identity)}
\end{align*}
Here $\mathtt{(ij)}$ and $\mathtt{(ijk)}$ denote cyclic permutations of respective components of  $H^{\otimes m}$.

A pseudoalgebra is {\em associative} if it satisfies the associativity identity, {\em Lie} if it satisfies the anti-commutativity and Jacobi identity, {\em Jordan} if it satisfies the commutativity and Jordan identity, etc.

For the $\lambda$-product in conformal algebras, the above identities become
\begin{align*}
& a_\lambda b=(-1)^{|a||b|}b_{-\d-\lambda} a,\qquad\text{(commutativity)}\\
& a_\lambda b=-(-1)^{|a||b|}b_{-\d-\lambda} a,\qquad\text{(anti-commutativity)}\\
& a_\lambda(b_\mu c)=(a_\lambda b)_{\lambda+\mu} c,\qquad\text{(associativity)}\\
& a_\lambda(b_\mu c)=(a_\lambda b)_{\lambda+\mu} c+(-1)^{|b||c|}b_\mu(a_\lambda c),\qquad\text{(Jacobi identity)}\\
&  (-1)^{|a||c|}a_\lambda ((b_\mu c)_\nu d)+
(-1)^{|a||b|}b_\mu((c_{\nu-\lambda}a)_{\lambda+\nu} d)\\&\quad+
(-1)^{|b||c|}c_{\nu-\mu}((a_{-\mu-\d} b)_{\lambda+\nu} d)\\
&=(-1)^{|a||c|}(a_{-\mu-\d}b)_{\lambda+\mu} (c_{\nu-\mu}d)+
(-1)^{|a||b|}(b_\mu c)_\nu (a_{\lambda+\nu} d)\\&\quad+
(-1)^{|b||c|}(c_{\nu-\mu}a)_{\lambda+\nu-\mu} (b_\mu d).\qquad\text{(Jordan identity)}
\end{align*}

\begin{example}\label{ex.current}  Let $\fa$ be an $\FF$-superalgebra.  Then $H\otimes\fa$ is an $H$-pseudoalgebra with the pseudoproduct defined by
\begin{equation*}
(1\otimes a)*(1\otimes b)=(1\otimes 1)\otimes_H (ab),\quad a,b\in\fa
\end{equation*}
and extended to all of $H\otimes\fa$ by bilinearity.  $H\otimes\fa$ is called a {\em current} pseudoalgebra and is denoted $\cur\fa$.

In the conformal setting, the current conformal superalgebra over $\fa$ is $\FF[\d]\otimes_{\FF}\fa$ with $a_\lambda b=ab$ for $a,b\in\fa$.

More generally, consider a Hopf subalgebra $H'$ of $H$ and an $H'$-pseudoalgebra $A$.  Then $H\otimes_{H'} A$ can be endowed with an $H$-pseudoproduct
\begin{equation*}
(1\otimes_{H'}a)*(1\otimes_{H'}b)=\left((1\otimes 1)\otimes_H 1\right)(a*b),\quad a,b\in A
\end{equation*}
extended to all of $H\otimes_{H'} A$ by bilinearity.  The $H$-pseudoalgebra $H\otimes_{H'} A$ is denoted $\cur^H_{H'} A$.
\end{example}

A subalgebra $I$ of a pseudoalgebra $R$ is an {\em ideal} if $a*b\in (H\otimes H)\otimes_H I$ whenever $a\in I$ or $b\in I$.  A pseudoalgebra is {\em simple} if it has no nontrivial proper ideals.  Other algebraic concepts can be as easily carried to the pseudoalgebra setting, see \cite{BDK} for details.

A cocommutative Hopf algebra is always isomorphic to a smash product of a group algebra and a universal enveloping algebra of some Lie algebra $\fd$: $H\simeq U(\fd)\sharp\FF(\Gamma)$.  When $\Gamma$ is finite, the theory of $H$-pseudoalgebras is essentially that of $U(\fd)$-pseudoalgebras with a $\Gamma$-action \cite[Corollary 5.1]{BDK}.  Thus we will concentrate on the case of $H=U(\fd)$, where $\fd$ is a finite-dimensional Lie algebra.

\subsection{Annihilation algebras}  Let $H$ be a cocommutative Hopf algebra.  The dual algebra $H^*=\Hom_\FF(H,\FF)$ carries the structure of a left and right $H$-module:
\begin{equation*}
\langle hx,f\rangle=\langle x,S(h)f\rangle,\quad \langle xh,f\rangle=\langle x,fS(h)\rangle\qquad\text{for } h,f\in H, x\in H^*.
\end{equation*} 
Let $R$ be an $H$-pseudoalgebra.  Consider the space $\coef(R)=H^*\otimes_H R$. $\coef(R)$ inherits the left $H$-module structure from $H^*$,
\begin{equation*}
h(x\otimes_H a)=(hx)\otimes_H a,
\end{equation*}
and has a natural product
\begin{align*}
&(x\otimes_H a)(y\otimes_H b)=\sum_i (xf_i)(yg_i)\otimes_H e_i,\\
&\qquad \text{where } a*b=\sum_i (f_i\otimes g_i)\otimes_H e_i.
\end{align*}

Moreover, if $R$ is an associative, Lie, or Jordan pseudoalgebra, then $\coef(R)$ is, respectively, an associative, Lie, or Jordan superalgebra.

$\coef(R)$ is called the {\em annihilation algebra} of $R$. 

\begin{example} $\coef(\cur\fa)=H^*\otimes \fa$.  More generally, $\coef(\cur^H_{H'} A)=H^*\otimes_{H'}A$.
\end{example}

\begin{lemma}\label{coeffsimple} Let $J$ be a simple Jordan pseudoalgebra.  Then $\coef(J)$ contains no $H$-stable ideals.
\end{lemma}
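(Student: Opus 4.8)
The plan is to set up the standard correspondence between pseudo-ideals of $J$ and $H$-stable ideals of the annihilation algebra $\coef(J)=H^*\otimes_H J$, and then feed in the simplicity of $J$. Since the paper has already reduced to $H=U(\fd)$, I work there: $H^*$ is then a commutative, linearly compact algebra on which $\fd\subset H$ acts by commuting ``derivations,'' and $\coef(J)$ is a module over $H^*$ whose $H$-action is the one induced from the $H^*$-factor. The only direction I actually need is: a nonzero $H$-stable ideal must be everything.

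Given an $H$-stable ideal $\mathcal I\subseteq\coef(J)$, I would set
\[
I=\{a\in J:\ H^*\otimes_H a\subseteq\mathcal I\}.
\]
The first task is to check that $I$ is a pseudo-ideal of $J$. That $I$ is an $H$-submodule is immediate: for $a\in I$ and $h\in H$ one has $x\otimes_H(ha)=(xh)\otimes_H a\in\mathcal I$ for every $x\in H^*$, by the right $H$-action on $H^*$. That $I$ absorbs pseudoproducts is the routine-but-technical point: writing $a*b=\sum_i(f_i\otimes g_i)\otimes_H e_i$, the identity $(x\otimes_H a)(y\otimes_H b)=\sum_i(xf_i)(yg_i)\otimes_H e_i\in\mathcal I$, valid for all $x,y\in H^*$ once $a\in I$, must be shown to force $H^*\otimes_H e_i\subseteq\mathcal I$. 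This is a nondegeneracy statement about the coefficient pairing and the $H$-module structure of $J$, which I would verify by letting $x,y$ range over $H^*$ and separating the components $e_i$.

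With $I$ a pseudo-ideal, simplicity of $J$ gives $I=0$ or $I=J$, and $I=J$ yields at once $\mathcal I=H^*\otimes_H J=\coef(J)$. Hence everything reduces to the implication $\mathcal I\neq0\Rightarrow I\neq0$, which I expect to be the main obstacle. One cannot argue on the $H$-module $H^*$ alone, since it has many small submodules (e.g.\ the line through an ``exponential'' vector fixed by the $\fd$-action), so the multiplicative structure of the ideal must enter alongside $H$-stability. Starting from a nonzero $w=\sum_\alpha x_\alpha\otimes_H u_\alpha\in\mathcal I$, I would first use the $\fd$-action to lower the order of vanishing of a coefficient and thereby reduce to the case where some coefficient has nonzero value under the counit (a nonzero ``constant term''), and then exploit that $H^*$ is local and linearly compact, together with simplicity of $J$ propagated through the Jordan product, to saturate a whole slice $H^*\otimes_H a\subseteq\mathcal I$ for some $a\neq0$; this places $a$ in $I$. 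The delicate part is precisely this saturation step, where the interplay of completeness of $H^*$ and the Jordan product, rather than any single structural feature, does the work.
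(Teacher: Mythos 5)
The paper's own ``proof'' is only a pointer to \cite[Proposition 3.13]{BDK}, and your plan does reproduce the strategy of that cited argument: pass from an $H$-stable ideal $\mathcal I\subseteq\coef(J)=H^*\otimes_H J$ to the pseudo-ideal $I=\{a\in J:\ H^*\otimes_H a\subseteq\mathcal I\}$ and invoke simplicity. The $H$-submodule check is fine, and the ``absorbs pseudoproducts'' step, though only sketched, is genuinely fillable: writing $a*b=\sum_i(h_i\otimes 1)\otimes_H e_i$ in canonical form with $h_i$ linearly independent and using nondegeneracy of the pairing $H^*\times H\to\FF$ (e.g.\ for $H=\FF[\d]$, evaluating at $x=t^k$ and inducting on $k$), one extracts $H^*\otimes_H e_i\subseteq\mathcal I$ for each $i$. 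So the architecture is right and matches the source.

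The genuine gap is exactly where you locate it: the implication $\mathcal I\neq 0\Rightarrow I\neq 0$. Everything else in the lemma is formal; this is the entire content, and your proposal offers only a program for it (``lower the order of vanishing, then saturate a whole slice using linear compactness and the Jordan product''), not an argument. The first half of that program is sound --- transitivity of the $\fd$-action on $H^*\cong\co_{\dim\fd}$ lets you assume $\mathcal I$ contains an element not in $F^1H^*\otimes_H J$ --- but the ``saturation'' half is precisely the nontrivial point: you must explain why multiplying a single such element by $\coef(J)$ and acting by $H$ forces \emph{all} coefficients $x\otimes_H a$, $x\in H^*$, of some fixed nonzero $a\in J$ into $\mathcal I$, and no mechanism is given for controlling the higher-order terms that the Jordan product and the $H$-action keep producing. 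As written, one cannot even rule out that $\mathcal I$ is a nonzero $H$-stable ideal with $I=0$, which would make the whole reduction vacuous. To submit this as a proof you would need either to carry out that filtration/completeness argument in detail (essentially redoing the relevant part of \cite[Proposition 3.13]{BDK}) or to cite it, as the paper does.
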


\begin{proof}  See the proof of \cite[Proposition 3.13]{BDK}.
\end{proof}

We also remark that for a finite pseudoalgebra $J$, $\coef(J)$ is linearly compact by construction.

\subsection{TKK construction for pseudoalgebras}  Similarly to Jordan superalgebras, finite Jordan pseudoalgebrs can be embedded into short-graded Lie pseudoalgebras.  Specifically, let $J$ be a finite Jordan pseudoalgebra.  Then there exists a Lie pseudoalgebra $L=L_{-1}\oplus L_0\oplus L_1$ such that $L_{-1}\simeq J$ as $H$-modules. 

The construction of $L(J)$ is given in \cite[Section 4]{Kol} in the non-super case and can easily be extended to the superalgebras.  In this paper, however, we do not require the specifics of the pseudoalgebra TKK construction and thus omit the details.  Namely, given a finite Jordan pseudoalgebra $J$, we have
\begin{equation*}
\begin{CD}
J @>\phantom{longer}\text{TKK}\phantom{longer}>> L(J)\\
@V\text{annihilation}VV @VV\text{annihilation}V\\
\coef(J) @>\text{TKK}>> L(\coef(J))\simeq \coef(L(J))
\end{CD}.
\end{equation*}

The canonical isomorphism in the lower right corner of the diagram follows from commutativity of the TKK and $\coef$ functors, which preserve, respectively, the $H$-action and the short grading.

To classify simple finite Jordan pseudoalgebras one should start with either the classification of their annihilation algebras or their Lie pseudoalgebra counterparts (respectively, go up from the bottom left or left from the top right corners of the diagram).  We utilize the former approach because the list of potential annihilation algebras exists already \cite{CKj}, whereas all simple Lie pseudoalgebras are unknown at the moment.  Note that in the non-super case it was possible to use the classification of simple Lie non-super pseudolagebras $L(J)$ (i.e. move along the top row of the diagram) \cite{Kol}.  Both approaches ultimately rely on our understanding of linerly compact Lie superalgebras and their derivations (the lower right corner of the diagram).

\subsection{Reconstruction}  Given a linearly compact superalgebra $\mj$ with an action of $\fd$, one wishes to find all corresponding pseudolagebras $J$ such that $\coef(J)=\mj$.  This is known as a reconstruction problem.  The reconstruction is straightforward  for one-dimensional $\fd$ (i.e. for conformal superalgebras) but is more complicated in higher dimensions.  Here we concentrate on the case of conformal superalgebras.

\begin{lemma} Let $J$ be a Jordan conformal superalgebra.  Then $\Tor_{\FF[\d]} J$ is an ideal of $J$.
\end{lemma}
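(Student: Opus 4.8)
The plan is to argue entirely in the conformal ($\lambda$-product) formulation, where a subspace $I\subseteq J$ is an ideal precisely when $a_\lambda b,\ b_\lambda a\in\FF[\lambda]\otimes I$ for all $a\in I$ and $b\in J$. First I would check that $\Tor_{\FF[\d]}J$ is an $\FF[\d]$-submodule, so that it is at least a candidate conformal subalgebra. This is formal: $\FF[\d]$ is a PID, so if $p(\d)a=0$ and $q(\d)a'=0$ with $p,q\neq 0$, then $(pq)(\d)(a+a')=0$, and $p(\d)(\d a)=\d\,p(\d)a=0$; hence $\Tor_{\FF[\d]}J$ is closed under addition and under the $\d$-action.

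The computational core uses only sesquilinearity. Iterating $(\d a)_\lambda b=-\lambda\,a_\lambda b$ gives $(\d^n a)_\lambda b=(-\lambda)^n a_\lambda b$, and by $\FF$-linearity $(p(\d)a)_\lambda b=p(-\lambda)\,a_\lambda b$ for every $p\in\FF[\d]$. So I would fix $a\in\Tor_{\FF[\d]}J$, choose a nonzero $p$ with $p(\d)a=0$, and take $b\in J$ arbitrary; then $p(-\lambda)\,(a_\lambda b)=(p(\d)a)_\lambda b=0$ as an element of $\FF[\lambda]\otimes J$. The point I would then stress is that $\FF[\lambda]\otimes_\FF J$ is a \emph{free} $\FF[\lambda]$-module (choose an $\FF$-basis of $J$), hence torsion-free over the integral domain $\FF[\lambda]$; since $p(-\lambda)\neq 0$, multiplication by it is injective, forcing $a_\lambda b=0$. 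Thus torsion elements annihilate $J$ on the left outright.

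To finish, I would invoke the Jordan (commutativity) axiom $b_\lambda a=(-1)^{|a||b|}a_{-\d-\lambda}b$: since every coefficient of $a_\lambda b$ vanishes we have $a_\mu b=0$ for all values of the parameter, whence $b_\lambda a=0$ as well. Therefore $a_\lambda b,\ b_\lambda a\in\FF[\lambda]\otimes\Tor_{\FF[\d]}J$ (both being $0$), and in particular $\Tor_{\FF[\d]}J$ is closed under the product, so it is a conformal subalgebra and an ideal. I do not anticipate a genuine obstacle; the only step demanding care is conceptual rather than technical, namely keeping the polynomial variable $\lambda$ strictly separate from the module variable $\d$. The injectivity of multiplication by $p(-\lambda)$ is a statement about the free $\FF[\lambda]$-module structure of $\FF[\lambda]\otimes J$ and must not be conflated with the (possibly nontrivial) $\FF[\d]$-torsion living inside $J$ itself.
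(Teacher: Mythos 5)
Your argument is correct and is essentially the paper's own proof: both rest on the single identity $(p(\d)a)_\lambda b=p(-\lambda)\,a_\lambda b$ together with the torsion-freeness of $\FF[\lambda]\otimes J$ over $\FF[\lambda]$ to conclude $a_\lambda b=0$. The extra details you supply (closure of the torsion submodule under $\d$, and deducing $b_\lambda a=0$ from commutativity) are routine and left implicit in the paper.
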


\begin{proof}\cite{DK} Let $a\in Tor J$, i.e. such that $p(\d)a=0$ for some $p(\d)\in\FF[\d]$.  For any $b\in J$, $0=\left(p(\d)a\right)_\lambda b=p(-\lambda)(a_\lambda b)$, hence $a_\lambda b=0$.
\end{proof}

It follows that a simple Jordan conformal algebra is torsion-free, hence free as a $\FF[\d]$-module.  

\begin{corollary}\label{confalg.reconst}  A simple finite Jordan conformal superalgebra $J$ is uniquely determined by its annihilation algebra $\coef(J)$ and the $\FF[\d]$-action on $\coef(J)$.
\end{corollary}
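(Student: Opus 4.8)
The plan is to first reduce $J$ to a free $\FF[\d]$-module, and then to recover separately the $\FF[\d]$-module structure and the $\lambda$-product of $J$ from the pair $(\coef(J),\d)$.

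First I would record the reductions. By the preceding lemma $\Tor_{\FF[\d]}J$ is an ideal, so a simple $J$ is torsion-free and, being finite, free over $\FF[\d]$; fix an identification $J\simeq\FF[\d]\otimes V$ with $V\simeq J/\d J$ finite-dimensional. For $H=\FF[\d]$ the dual is $H^*\simeq\FF[[t]]$, the left $\d$-action being $-\partial_t$ (so $\d\cdot t^{(m)}=-t^{(m-1)}$ on the divided-power basis dual to $\{\d^m\}$), whence $\coef(J)\simeq\FF[[t]]\,\hat\otimes\,V$ with $\d$ acting as $-\partial_t\otimes\id$. Writing $a_{(m)}=t^{(m)}\otimes_H a$ for the modes, a direct check from the definition of the $\coef$-product shows that $\d$ is an even derivation of $\coef(J)$ and that $\ker\bigl(\d\mid_{\coef(J)}\bigr)$ equals the space $\{a_{(0)}:a\in J\}$ of $0$-modes, canonically isomorphic to $V$.

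This already recovers the module: $V=\ker\d$ is intrinsic to $(\coef(J),\d)$, so the rank $r=\dim\ker\d$, and thus the free $\FF[\d]$-module $J\simeq\FF[\d]\otimes V$, are determined. It remains to recover the $\lambda$-product $a_\lambda b=\sum_k\lambda^k c_k$, i.e. the elements $c_k\in J$. Here the engine is the universal formula expressing the $\coef$-products $a_{(m)}\,b_{(n)}$ as explicit linear combinations of the modes $(c_k)_{(m+n-k)}$; in particular, multiplying a $0$-mode $v=a_{(0)}\in\ker\d$ by the $n$-th mode $b_{(n)}$ returns precisely $(c_0)_{(n)}$, and the assignment $c\mapsto(c_{(n)})_{n\ge0}$ is injective because $J$ is free. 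Thus knowing all modes of $c_0$ pins down $c_0\in J$, and the remaining $c_k$ are obtained inductively from products of higher modes together with repeated application of $\d$.

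The main obstacle is that the higher modes $b_{(n)}$ are not themselves canonically visible in $(\coef(J),\d)$: since $\d=-\partial_t$ is surjective with kernel $V$, an antiderivative chain lifting $b_{(0)}\in\ker\d$ to elements $b_{(n)}$ is determined only up to $\ker\d$ at each stage, so one cannot simply name the modes. The crux of the proof is therefore to show that the extraction of the $c_k$ is independent of these choices, equivalently that the reconstructed $\lambda$-product is well defined. I would handle this as a uniqueness statement: given a $\d$-equivariant isomorphism $\phi\colon\coef(J_1)\to\coef(J_2)$ of annihilation algebras, $\phi$ restricts to a linear isomorphism $\bar\phi\colon V_1\to V_2$, and one shows $\phi$ must be of product form $\id_{\FF[[t]]}\hat\otimes\bar\phi$. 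The input here is the rigidity of the coordinate, namely that the identity is the only continuous automorphism of $\FF[[t]]$ commuting with $\partial_t$; feeding this into the multiplicativity of $\phi$ and comparing $t$-degrees forces the lower-order corrections to vanish term by term, by induction on the $t$-filtration. Once $\phi=\id\hat\otimes\bar\phi$, multiplicativity of $\phi$ is exactly the statement that the $\FF[\d]$-linear extension of $\bar\phi$ is an isomorphism of conformal superalgebras, which yields the asserted uniqueness.
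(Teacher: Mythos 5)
Your setup is sound and matches the paper's: the preceding lemma makes a simple $J$ torsion-free, hence free over $\FF[\d]$, and your identification of $\coef(J)$ with $\FF[[t]]\,\hat\otimes\,V$, of $\ker\d$ with the $0$-modes (canonically $J/\d J$), and the formula $a_{(0)}b_{(n)}=(c_0)_{(n)}$ are all correct. You have also correctly located the crux: the higher modes $b_{(n)}$ are only determined up to $\ker\d^{n}$ inside the abstract pair $(\coef(J),\d)$. The paper itself disposes of all of this by citing \cite[Proposition 5.1]{DK}, so you are in effect re-proving that proposition; the problem is in your final paragraph.

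The claimed key step --- that every $\d$-equivariant continuous algebra isomorphism $\phi\colon\coef(J_1)\to\coef(J_2)$ is of product form $\id_{\FF[[t]]}\hat\otimes\bar\phi$ --- is false, and no amount of ``rigidity of the coordinate'' will rescue it, because the splitting $V\subset J$ of $J/\d J$ is not canonical. Concretely, if $\psi\colon J_1\to J_2$ is any conformal isomorphism with $\psi(v)=\sum_j\d^j w_j(v)$ and some $w_j\neq 0$ for $j>0$, then $\coef(\psi)$ sends $t^{(n)}\otimes v$ to $\sum_j(-1)^j t^{(n-j)}\otimes w_j(v)$: it is $\d$-equivariant, continuous and multiplicative, yet carries exactly the lower-order corrections you claim must vanish. (The appeal to automorphisms of $\FF[[t]]$ commuting with $\partial_t$ also does not apply as stated: $\coef(J)$ carries no distinguished subalgebra $\FF[[t]]$, and for $J(1,n)$, $\js(1,1)$, $\jck(1,4)$ the Jordan product is not even $\FF[[t]]$-bilinear.) What is actually true, and what you need, is the weaker statement that $\phi$ is induced by \emph{some} conformal isomorphism $\psi$. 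The missing ingredient for a choice-free reconstruction is the topology of the linearly compact algebra $\coef(J)$: an antiderivative chain $(x_n)_{n\ge 0}$ with $\d x_n=-x_{n-1}$, $x_{-1}=0$, is the mode sequence of an element of $J$ if and only if $x_n\to 0$, since $a=\sum_{j\le d}\d^j v_j$ has $a_{(n)}=\sum_j(-1)^j t^{(n-j)}\otimes v_j\in\mathcal{A}_{n-d}$, whereas a chain with infinitely many nonzero ``new'' components does not converge. Adding this characterization removes the ambiguity in your extraction of the $c_k$ and makes the reconstruction functorial in $(\coef(J),\d)$; this is precisely the content of the cited D'Andrea--Kac proposition. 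As written, your argument has a genuine gap at its decisive step.
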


\begin{proof} Follows directly from \cite[Proposition 5.1]{DK}.
\end{proof}

\subsection{Conformal algebras from KKM doubles}  Consider the Jordan superalgebra $J(1,n)=\wedge(1,n)\oplus\wedge(1,n)\theta$.  For every $a\in\wedge(n)$, consider the formal distributions
\begin{equation*} 
a^+=\sum_{n\in\ZZ} (at^n) z^{-n-1}\quad\text{ and }\quad a^-=\sum_{n\in\ZZ} (at^n\theta) z^{-n-1}.  
\end{equation*}
Let $a$ and $b$ be homogeneous elements of $\wedge(n)$ of degrees $r$ and $s$, respectively.  We have the following $\lambda$-products: 
\begin{align*}
& {a^+}_\lambda b^\pm=(ab)^\pm\\
& {a^-}_\lambda {b^+}=(-1)^s(ab)^-\\
& {a^-}_\lambda {b^-}= 
(-1)^s\left((r-1)\d(ab)^+ +(-1)^r\left(\sum_{i=1}^{n-2} \d_ia\d_ib+ \d_{n-1}a\d_nb+\d_na\d_{n-1}b\right)^+\right.\\ &\left.\phantom{{a^-}_\lambda {b^-}\sum_a^b}+\lambda(r+s-2)(ab)^+\right).
\end{align*}
Thus the formal distributions above span a conformal algebra.  We denote it as $J_n$.

\begin{remark} The last formula above is consistent with the $\lambda$-bracket for the conformal algebra $K_n$ \cite[Example 3.8]{FK} (modulo the different ways of writing brackets for $K(1,n)$ here and in \cite{FK}).
\end{remark}

\subsection{Further examples of pseudoalgebras} We present here two conformal superalgebras (i.e. pseudoalgebras over $\FF[\d]$) whose annihilation algebras are the exceptional conformal superalgebras $\js(1,1)$ and $\jck(1,4)$.

\begin{example}\label{ex.js} The conformal superalgebra $\js_1$ is freely spanned over $\FF[\d]$ by an even element $S$ and an odd element $T$ such that 
\begin{equation*}
S_\lambda S=2S, \quad T_\lambda T=(\d+2\lambda)S,\quad T_\lambda S=T.
\end{equation*}
 The annihilation algebra of $\js_1$ is $\js(1,1)$ with the surjective derivation $\frac{\d}{\d t}$.  Written in the form of formal distributions $S$ and $T$ are $S=\sum (\xi t^n)z^{-n-1}$ and $T=\sum t^nz^{-n-1}$ (with reversed parities).
\end{example}

\begin{example}\label{ex.jck} The conformal superalgebra $\jck_4$ is freely spanned over $\FF[\d]$ by four even and four odd elements.  Its annihilation algebra is the exceptional Jordan superalgebra $\jck(1,4)$ whose TKK Lie algebra is $E(1,6)\subset K(1,6)$.  

Below we explicitly describe embeddings of $\jck_4$ into Lie conformal superalgebras $K_6$ and $CK_6$ and the Jordan conformal superalgebra $J_3$.

Consider the Lie conformal superalgebra $K_6$ with the annihilation algebra $K(1,6)$.  As a $\FF[\d]$-module, $K_6$ is freely spanned by $\wedge(6)$.  Introduce the following change of basis in $\wedge(6)$: instead of odd indeterminates $\xi_i$, use $\omega_i$ defined as $\omega_i=\xi_i$, $i=1,\dots, 4$, $\omega_5=\epsilon\xi_5+\alpha\epsilon\xi_6$, $\omega_6=\alpha\epsilon\xi_5+\epsilon\xi_6$, where $\alpha,\epsilon\in\FF$, $\alpha^2=-1$ and $\epsilon^2=\alpha/2$.  (This is essentially the basis used in \cite{CK6}.)  The $\lambda$-bracket on $K_6$ then becomes
\begin{equation*}
[a_\lambda b]=\left(\left(\frac{r}{2}-1\right)\d(ab)+(-1)^r\frac{1}{2}\sum_{i=1}^6\delta_ia\delta_ib\right)+\lambda\left(\frac{r+s}{2}-2\right)ab,
\end{equation*}
where $a=\omega_{i_1}\dots\omega_{i_r}$, $b=\omega_{j_1}\dots\omega_{j_s}$, and $\delta_i=\d/\d\omega_i$.

$K_6$ contains a simple subalgebra $CK_6$ spanned over $\FF[\d]$ by the elements 
\begin{align*}
&L=(-1+\alpha\d^3\nu)/2,\qquad a_{ij}=\omega_i\omega_j+\alpha\d(\omega_i\omega_j)^*,\\ 
&b_i=\omega_i-\alpha\d^2(\omega_i)^*,\quad c_{ijk}=\omega_i\omega_j\omega_k+\alpha(\omega_i\omega_j\omega_k)^*, 
\end{align*}
where $\nu=\omega_1\dots\omega_6$ and $a^*\in\wedge(6)$ is a monomial in $\omega_i$'s such that $aa^*=\nu$.  The Lie superalgebra $E(1,6)$ is the annihilation algebra of $CK_6$.  A short grading of $E(1,6)$ gives rise to $\jck(1,4)$.  Specifically, $\jck(1,4)$ is the $-1$-eigenspace of $\xi_6\xi_5$ or, equivalently, the $-\alpha$-eigenspace of $\omega_5\omega_6$.  Since the short grading is consistent with the conformal structure, the $-\alpha$ eigenspace of $\ad\omega_5\omega_6$ acting on $CK_6$ is a Jordan conformal superalgebra.  We denote it $\jck_4$.

$\jck_4$ is spanned by even elements $a_{i6}-\alpha a_{i5}$, $i=1,\dots,4$, and odd elements $b_5-\alpha b_6$, $c_{126}-c_{346}, c_{136}+c_{246}, c_{236}-c_{146}$.  The $\lambda$-product in $\jck_4$ is defined, as in the TKK construction, via the bracket in $K_6$:  $$a_\lambda b=\left.\left[\left[a_\mu (\xi_4\xi_5)\right]_\lambda b\right]\right|_{\mu=0}.$$

Recall that the embedding  $\phi:E(1,6)\hookrightarrow K(1,6)$ respects the short grading
of both algebras with respect to the same short subalgebra.  Restricting $\phi$
to the $-1$st components, we obtain the embedding of $\jck(1,4)$ into $J(1,3)$.
Just as for $CK_6$ and $K_6$, this embedding passes to the level of conformal algebras,
yielding $\jck_4\hookrightarrow J_3$.  Specifically, recomputing the basis of 
$\jck_4$ obtained above in terms of $\xi_i$'s and taking into account (\ref{eq.tkkforkkm}),
we see that $JCK_4$ is a subalgebra of $J_3$ spanned over $\FF[\d]$ by elements
\begin{align*}
\xi_i^--\d(\xi_i^*)^+,\quad 1^++\d\nu^-,\qquad \xi_i^++(\xi_i^*)^-,\quad 1^--\d^2\nu^+,
\end{align*}
where $i=1,2,3$, $\nu=\xi_1\xi_2\xi_3$ and $\xi_i^*$ is a monomial in $\xi_i$'s such that $\xi_i\xi_i^*=\nu$.
\end{example}

Since $\Der E(1,6)\subset \Der K(1,6)$ and the TKK construction of $\jck(1,4)$ and $J(1,3)$ utilize the same short subalgebra, Theorem~\ref{tkkprop}(d) implies that $\Der \jck(1,4)\subset\Der J(1,3)$.  By Corollary~\ref{derjmn}, $\Der J(1,3)=K(1,3)\oplus P(1,3)\d_\theta$.  Since $\d_\theta$ does not preserve $\jck(1,4)$, we have the following

\begin{corollary}\label{derjck} $\Der \jck(1,4)\subset K(1,3)$.
\end{corollary}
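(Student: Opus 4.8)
The plan is to start from the inclusion $\Der\jck(1,4)\subset\Der J(1,3)$ that was already derived from Theorem~\ref{tkkprop}(d), combined with the explicit description $\Der J(1,3)=K(1,3)\oplus P(1,3)\d_\theta$ supplied by Corollary~\ref{derjmn} in the odd case $m=1$. Any $\phi\in\Der\jck(1,4)$ then decomposes uniquely as $\phi=\kappa+\pi$ with $\kappa\in K(1,3)$ and $\pi\in P(1,3)\d_\theta$. Since $\phi$ comes from a derivation of $E(1,6)$ annihilating the short $\sli_2$, it commutes with the grading element and hence preserves every graded piece of $K(1,6)$; in particular it preserves the subspace $\jck(1,4)=(E(1,6))_{-1}\subset(K(1,6))_{-1}=J(1,3)$. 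The whole corollary thus reduces to showing that the $P(1,3)\d_\theta$-component $\pi$ is forced to vanish.

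The reduction I would use exploits the $\theta$-grading $J(1,3)=J^+\oplus J^-$, where $J^+=\wedge(1,3)$ and $J^-=\wedge(1,3)\theta$. Every element of the summand $K(1,3)$ is obtained by extending a derivation of $P(1,3)$ via $a\theta\mapsto(\kappa a)\theta$, so it preserves this grading; by contrast $\pi=f\d_\theta$ annihilates $J^+$ and sends $J^-$ into $J^+$, since $\d_\theta(a\theta)=a$ and $f\cdot a\in\wedge(1,3)$. First I would record that each $\kappa\in K(1,3)$ preserves $\jck(1,4)$: these are the \emph{geometric} derivations coming from the $\xi_4,\xi_5,\xi_6$-independent elements of $K(1,6)$, whose adjoint action respects $E(1,6)$ together with the short grading cutting out $\jck(1,4)$. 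Granting this, $\pi=\phi-\kappa$ also preserves $\jck(1,4)$, and the problem collapses to the single assertion: if $f\d_\theta$ maps $\jck(1,4)$ into itself, then $f=0$.

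The last step is the computation, and it is where the real work lies. Written through the explicit basis of Example~\ref{ex.jck}, each basis vector of $\jck(1,4)$ pairs one distribution of type $a^+$ with one of type $a^-$, so $\jck(1,4)$ is the graph of a linear correspondence between its $J^-$- and $J^+$-projections. The prototype is exactly the text's observation: $\d_\theta$ sends $\xi_i^--\d(\xi_i^*)^+\mapsto\xi_i^+$, which is not in $\jck(1,4)$ because the only basis vector carrying $\xi_i^+$ also carries $(\xi_i^*)^-$. For general $f$ one must track the $J^+$-valued map $f\d_\theta$ against this graph: requiring that $f\d_\theta$ (equivalently, after subtracting the grading-preserving $\kappa$, the commutator of $\kappa$ with the graph map) land back inside $\jck(1,4)$ produces, degree by degree in $t$ and monomial by monomial in $\xi_1,\xi_2,\xi_3$, a system whose only solution is $f=0$. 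The main obstacle is precisely this bookkeeping for arbitrary $f\in P(1,3)$; a cleaner alternative, once one has shown $K(1,3)\subset\Der\jck(1,4)$, is to observe that $\{f:f\d_\theta\text{ preserves }\jck(1,4)\}$ is a $K(1,3)$-submodule of $P(1,3)$ under the bracket action, and since it excludes $f=1$ it cannot be all of the simple module $P(1,3)$, hence must be zero.
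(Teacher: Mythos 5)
Your overall route is the paper's: reduce to $\Der\jck(1,4)\subset\Der J(1,3)=K(1,3)\oplus P(1,3)\d_\theta$ and then argue that the $\d_\theta$-component of any derivation must vanish (the paper disposes of this second step in a single sentence, by noting that $\d_\theta$ itself does not preserve $\jck(1,4)$). The genuine gap in your elaboration is the pivotal intermediate claim that every $\kappa\in K(1,3)$ preserves $\jck(1,4)$ because such $\kappa$ are ``geometric derivations \dots whose adjoint action respects $E(1,6)$.'' This is not so: $E(1,6)$ is not cut out by independence from $\xi_4,\xi_5,\xi_6$; it is the annihilation algebra of $CK_6$, whose elements are the \emph{twisted} combinations $L=(-1+\alpha\d^3\nu)/2$, $b_i=\omega_i-\alpha\d^2(\omega_i)^*$, etc., and (being simple with trivial centralizer and no outer derivations) it is self-normalizing in $K(1,6)$, so $\ad\kappa$ for $\kappa\in K(1,3)\setminus E(1,6)$ does not preserve it. Here is a concrete witness against your claim inside $\jck(1,4)$ itself: the coefficient $t^3\otimes_H L=-\tfrac12 t^3+c\nu$ (with $c\neq 0$) of the Virasoro field of $CK_6$ lies in $E(1,6)$ and centralizes the short $\sli_2$ (both $t^3$ and $\nu$ kill every $\xi_i\xi_j$ with $i,j\in\{4,5,6\}$ under the bracket (\ref{kbracket})), so by Theorem~\ref{tkkprop}(d) its restriction to $\jck(1,4)$ is a derivation; in the decomposition $K(1,3)\oplus P(1,3)\d_\theta$ it reads $-\tfrac12\ad t^3+c'(\xi_1\xi_2\xi_3)\d_\theta$ with $c'\neq 0$, since $\nu$ is a nonzero multiple of $(\xi_1\xi_2\xi_3)\xi_4\xi_5\xi_6=(\xi_1\xi_2\xi_3)\d_\theta$. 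If $\ad t^3\in K(1,3)$ also preserved $\jck(1,4)$, then so would the difference $c'\xi_1\xi_2\xi_3\,\d_\theta$ --- and your own graph computation correctly shows it does not (it sends the coefficients of $1^--\d^2(\xi_1\xi_2\xi_3)^+$ to nonzero elements with zero $J^-$-projection, of which $\jck(1,4)$ has none in odd parity). Hence $\ad t^3$ does \emph{not} preserve $\jck(1,4)$, your claim $K(1,3)\subset\Der\jck(1,4)$ fails, and with it the reduction to the single assertion ``$f\d_\theta$ preserves $\jck(1,4)\Rightarrow f=0$.''

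The same example shows the difficulty is not mere bookkeeping: there exist derivations of $\jck(1,4)$ whose canonical extension to $J(1,3)$ has a nonzero $P(1,3)\d_\theta$-component, so any correct argument must confront genuine mixtures $\kappa+f\d_\theta$ rather than a pure $f\d_\theta$ (for instance by restricting to the even surjective derivations that are actually needed in Proposition~\ref{extseries}, or by making precise in what sense the containment in $K(1,3)$ is asserted). Your ``cleaner alternative'' inherits the same gap, since it again presupposes $K(1,3)\subset\Der\jck(1,4)$, and in addition takes for granted that $P(1,3)$ is a simple $K(1,3)$-module. Finally, the computation you identify as ``where the real work lies'' is never carried out --- the claim that the resulting linear system forces $f=0$ is asserted, not proved --- so even granting the reduction the argument would remain incomplete.
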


\section{Simple Jordan Pseudoalgebras}\label{sec3}

Throughout this section all pseudoalgebras and conformal superalgebras are assumed to be finite.

\subsection{Annihilation algebra}  The following two statements are slight extensions of \cite[Theorem 13.1]{BDK} and rely heavily on its proof.  By $\co_r$ we denote the algebra $\FF[[t_1,\ldots,t_r]]$ and $\fd$ is always taken to be a finite-dimensional Lie algebra.

\begin{proposition}\label{13.7.lie} If $L$ is a finite simple Lie $H=U(\fd)$-pseudoalgebra, then as a topological Lie superalgebra, $\coef(L)$ is isomorphic to an irreducible central extension of a current Lie superalgebra $\co_r\otimes\fs$, where $\fs$ is a simple linearly compact Lie superalgebra of growth $\dim\fd-r$.
\end{proposition}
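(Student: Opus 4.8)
The plan is to follow the proof of the non-super statement \cite[Theorem 13.1]{BDK} essentially line by line, carrying the $\ZZ/2\ZZ$-grading along at each step; the only genuinely new issue is to check that no sign obstructions arise.

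First I would isolate the structural input the argument actually needs: that $\coef(L)$ is a \emph{differentiably simple} linearly compact Lie superalgebra. Linear compactness is automatic for finite $L$ (as remarked after Lemma~\ref{coeffsimple}), and the left $H$-module structure makes $\fd\subset H=U(\fd)$ act on $\coef(L)$ by continuous even derivations. Since an ideal is stable under the Lie algebra $\fd$ exactly when it is stable under $H=U(\fd)$, the Lie analogue of Lemma~\ref{coeffsimple} — proved as in \cite[Proposition~3.13]{BDK} — says that $\coef(L)$ has no nonzero proper closed $\fd$-stable ideal. This is precisely differentiable simplicity with respect to the finite-dimensional Lie algebra $\fd$.

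Next I would invoke the linearly compact, super version of Block's structure theorem for differentiably simple algebras. Following \cite{BDK}, the argument chooses a minimal nonzero closed ideal $I$ of $\coef(L)$ (linear compactness supplies the descending-chain condition guaranteeing such an $I$) and studies its orbit under $\fd$: the closed span of all iterated images $\delta_1\cdots\delta_k(I)$ with $\delta_i\in\fd$ is a nonzero closed $\fd$-stable ideal, hence all of $\coef(L)$. One then shows that distinct derivational translates of $I$ commute and meet trivially, so that $\coef(L)$ is recovered as $\co_r\otimes I$ for some $r$, with $\fd$ acting through $\partial/\partial t_i$ on the $\co_r$ factor together with derivations of $I$. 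Finally $I$ is identified, modulo its center, with a simple linearly compact Lie superalgebra $\fs$, and tracking the center shows that $\coef(L)$ is an irreducible central extension of $\co_r\otimes\fs$; the classification of simple linearly compact Lie superalgebras then places $\fs$ on the known list. Note that the radical of $\coef(L)$ here is the non-$\fd$-stable ideal $\mathfrak{m}\otimes\fs$ (with $\mathfrak{m}\subset\co_r$ the maximal ideal), which is exactly why differentiable simplicity is compatible with a nontrivial current factor.

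It remains to match the growth. Since $H^*\cong\co_N$ with $N=\dim\fd$ and $L$ is finite and torsion-free over $H$, the space $\coef(L)=H^*\otimes_H L$ has growth exactly $N$; a central extension by a finite-dimensional center does not change growth, while $\co_r\otimes\fs$ has growth $r+\operatorname{growth}(\fs)$, forcing $\operatorname{growth}(\fs)=\dim\fd-r$, as claimed. The main obstacle is the third paragraph: I must verify that the minimal-ideal/orbit analysis of \cite[Theorem~13.1]{BDK} survives the passage to superalgebras without sign obstructions — in particular that minimality of $I$, the commuting and trivial intersection of its translates, and the identification of the simple factor $\fs$ are not disturbed by odd (possibly nilpotent) elements, and that the resulting central extension is genuinely irreducible. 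Once the underlying linearly compact machinery is seen to be sign-compatible, the super statement follows as in the non-super case.
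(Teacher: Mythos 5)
There is a genuine gap, and it sits exactly where the actual content of this proposition lies. You treat the passage from \cite[Theorem 13.1]{BDK} to the super setting as a matter of checking that ``no sign obstructions arise,'' and accordingly you assert that the minimal-ideal/orbit analysis recovers $\coef(L)$ as (a central extension of) $\co_r\otimes I$ with $I$ simple modulo its center. But the super-version of the Cartan--Guillemin/Block structure theorem does \emph{not} give this: a differentiably simple linearly compact Lie \emph{super}algebra is an irreducible central extension of $\co_r\otimes\wedge(m)\otimes\fs$, with an extra Grassmann factor $\wedge(m)$. The finite-dimensional local supercommutative algebra $\wedge(m)$ plays the role that truncated polynomial algebras play in Block's theorem in characteristic $p$; it is invisible in the non-super characteristic-$0$ argument you are copying, and your step ``$I$ is identified, modulo its center, with a simple linearly compact Lie superalgebra $\fs$'' is precisely where this possibility is silently discarded. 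Nothing in the orbit analysis of derivational translates rules it out, because $\wedge(m)\otimes\fs$ is differentiably simple without being simple.

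The missing step — which is the whole point of the proposition being a ``slight extension'' of \cite[Theorem 13.1]{BDK} rather than a verbatim repetition — is to show $m=0$. This does not follow from sign bookkeeping; it follows from the observation that $\fd$ acts on $\co_r\otimes\wedge(m)\otimes\fs$ by \emph{even} derivations, which necessarily have the form $D_0\otimes 1+\sum f_i\otimes D_i$ and hence preserve the ideal $\co_r\otimes\wedge'(m)\otimes\fs$ (where $\wedge'(m)$ is the augmentation ideal of $\wedge(m)$). If $m>0$ this is a nonzero proper $\fd$-stable (equivalently $H$-stable) closed ideal, contradicting the simplicity of $L$ via the Lie analogue of Lemma~\ref{coeffsimple}. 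You should add this argument; your growth count and the reduction to differentiable simplicity are fine, but as written the proposal asserts the key conclusion instead of proving it.
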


\begin{proof}  It follows from the proof of \cite[Theorem 13.1]{BDK} and the super-version of the Cartan--Guillemin theorem that $\coef(L)$ is an irreducible central extension of $\co_r\otimes\wedge(m)\otimes\fs$, where $\fs$ is a simple linearly compact Lie superalgebra.  In order to complete the proof along the lines of \cite[Theorem 13.1]{BDK}, it remains to show that $m=0$.  If $m>0$, consider the ideal $\co_r\otimes\wedge'(m)\otimes\fs$ (here $\wedge'(m)$ is the augmented ideal of $\wedge(m)$).  It is regular: $\fd$ acts by even derivations which have the form
$D_0\otimes 1+\sum f_i\otimes D_i$, where $D_0$ is an even derivation of $\co_r\otimes\wedge(m)$ and $f_i\in \co_r\otimes\wedge(m)$ \cite[Proposition 2.12]{FK}.  This contradicts simplicity of $L$.
\end{proof}

\begin{proposition}\label{13.7.jord} If $J$ is a finite simple Jordan $H=U(\fd)$-pseudoalgebra, then as a topological Lie superalgebra, $L(\coef(J))$ is isomorphic to a current Lie superalgebra $\co_r\otimes\fs$, where $\fs$ is a simple linearly compact Lie superalgebra of growth $\dim\fd-r$.
\end{proposition}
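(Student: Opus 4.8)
The plan is to deduce the statement from Proposition~\ref{13.7.lie} and then to use the short $\sli_2$-grading furnished by the TKK construction to split off the central extension. First I would pass along the commutative diagram relating the TKK and $\coef$ functors, identifying $L(\coef(J))$ with $\coef(L(J))$. By Theorem~\ref{tkkprop}(b) the Lie pseudoalgebra $L(J)$ is simple, and it is finite since the (super) TKK construction of \cite{Kol} produces $L(J)=L_{-1}\oplus L_0\oplus L_1$ with $L_{\pm1}\simeq J$ and $L_0$ of finite rank. Proposition~\ref{13.7.lie} then applies to $L(J)$ and shows that $\coef(L(J))\simeq L(\coef(J))$ is an irreducible central extension of a current Lie superalgebra $\co_r\otimes\fs$, with $\fs$ simple linearly compact of growth $\dim\fd-r$. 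Since $\fs$ and $r$ are already the ones required, it remains only to prove that this extension is trivial, i.e. that the central ideal $C$ with $L(\coef(J))/C\simeq\co_r\otimes\fs$ vanishes.

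Next I would observe that $C$ is exactly the center of $L(\coef(J))$. Indeed, $\co_r\otimes\fs$ is centerless: as $\fs$ is simple its center is zero, and if $\sum_i f_i\otimes s_i$ with the $s_i$ linearly independent commutes with every $1\otimes t$, then $[s_i,t]=0$ for all $t$, whence each $s_i=0$. Consequently the image of $Z(L(\coef(J)))$ under the quotient map is central in $\co_r\otimes\fs$ and hence zero, so $Z(L(\coef(J)))\subseteq C$; the reverse inclusion is the definition of a central extension. Thus the whole problem reduces to showing that the TKK Lie superalgebra $L(\coef(J))$ has trivial center.

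The centerlessness is where the Jordan, as opposed to merely Lie, nature of the situation enters, and I would extract it from the internal structure of the TKK construction. The grading element $\tfrac12 h$ of the short $\sli_2$-triple acts semisimply with eigenvalues $-1,0,1$ on $L_{-1},L_0,L_1$, so any central element, commuting with $h$, lies in $L_0$. On the other hand, in the construction of \cite[Sections VII.5--6]{Jac} the component $L_0$ is realized as a subalgebra of the Lie superalgebra of endomorphisms of $L_{-1}\simeq\coef(J)$, with $[\phi,a]=\phi(a)$ for $\phi\in L_0$ and $a\in L_{-1}$; in particular $L_0$ acts faithfully on $L_{-1}$. A central element annihilates $L_{-1}$, hence is the zero endomorphism, hence is zero in $L_0$. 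Therefore $C=Z(L(\coef(J)))=0$ and $L(\coef(J))\simeq\co_r\otimes\fs$, as required.

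The main obstacle I anticipate is precisely this last step: one must verify the faithfulness of the $L_0$-action on $L_{-1}$ in the present linearly compact, $\ZZ/2\ZZ$-graded setting, and treat the case in which $\coef(J)$ is not unital by first adjoining a unit and running the construction on $\coef(J)\oplus\FF$, checking that this does not enlarge the center. I would also need to confirm that the short grading of $L(\coef(J))$ inherited from TKK agrees with the grading implicit in Proposition~\ref{13.7.lie}, so that the two descriptions of $\coef(L(J))$ are compatible; this compatibility is what lets the $\sli_2$-argument interact correctly with the current-algebra structure.
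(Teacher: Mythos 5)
Your strategy --- identify $L(\coef(J))$ with $\coef(L(J))$, apply Proposition~\ref{13.7.lie} to the Lie pseudoalgebra $L(J)$, and then kill the central kernel by observing that a TKK Lie superalgebra is centerless --- is genuinely different from the paper's, and your final step (any central element lies in $L_0$ by the $\sli_2$-grading, and $L_0$ acts faithfully on $L_{-1}$) is a correct elaboration of the one-line centerlessness claim the paper itself makes. The problem is the middle step. To invoke Proposition~\ref{13.7.lie} you need $L(J)$ to be a finite \emph{simple} Lie \emph{pseudoalgebra}, and you justify simplicity by citing Theorem~\ref{tkkprop}(b). That theorem concerns Jordan superalgebras and their TKK Lie superalgebras; it says nothing about $H$-pseudoalgebras, where an ideal must in addition be an $H$-submodule and where the short-graded envelope $L(J)$ is only sketched (the paper explicitly declines to develop the super version of the construction of \cite[Section 4]{Kol}: ``we do not require the specifics of the pseudoalgebra TKK construction and thus omit the details''). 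So the assertion that $L(J)$ is finite and simple is precisely the unproven input on which your whole argument rests, and supplying it amounts to redoing a super-pseudoalgebra version of \cite{Kol} --- not a formality.

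The paper's proof is organized to avoid exactly this. It never uses $L(J)$: it works directly with the linearly compact Lie superalgebra $\ml=L(\coef(J))$, obtained by applying TKK at the annihilation-algebra level where Theorem~\ref{tkkprop} genuinely applies, and re-runs the proof of \cite[Theorem 13.1]{BDK} with $\ml$ in place of $\coef(L)$. Concretely it verifies four hypotheses: an open subalgebra of $\fd\ltimes\ml$ containing no ideals (transported from the filtration of $\mj=\coef(J)$ through the TKK construction); absence of proper $H$-stable ideals in $\ml$ (an $H$-stable ideal of $\ml$ would produce an $H$-stable ideal of $\mj$, contradicting Lemma~\ref{coeffsimple}); and the surjectivity and transitivity properties of the $\fd$-action, again lifted from $\mj$ to $\ml$. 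Only then does it use centerlessness of the TKK algebra, as you do, to pass from ``irreducible central extension of $\co_r\otimes\fs$'' to ``isomorphic to $\co_r\otimes\fs$''. If you want to keep your route, you must first prove the pseudoalgebra analogues of Theorem~\ref{tkkprop}(a),(b) together with finiteness of $L(J)$; otherwise you should switch to the paper's direct verification at the level of $\ml$.
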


\begin{proof}  Denote $\coef(J)$ as $\mj$.  It suffices to show that $L(\mj)$ is an irreducible central extension of the superalgebra $\co_r\otimes\fs$: since $L(\mj))$ is obtained via the TKK construction, it must be centerless.  Thus we will again follow closely the model of \cite[Theorem 13.1]{BDK}.

In order to replace $\coef(L)$ with $\ml=L(\mj)$ in the proof of Proposition~\ref{13.7.lie}, we have to demonstrate the following:

(i) $\fd\ltimes\ml$ posesses an open subalgebra containing no ideals of $\fd\ltimes\ml$ (this is an analog of \cite[Lemma 13.3]{BDK});

(ii) $\ml/Z(\ml)$ contains no proper $H$-stable ideals;

(iii) a sufficiently high power of any nonzero element of $\fd$ maps any given open subspace of $\ml$ surjectively onto $\ml$;

(iv) the action of $\fd$ on $\ml$ is transitive.

To demonstrate (i), note that just as in the proof of \cite[Lemma 13.3]{BDK}, $H(\mj_i)=\mj$ for all $i$ (here $\mj_i$ is a component in the filtration of $\mj$ induced from its construction as an annihilation algebra).  By the TKK construction this implies that for any filtration component $\ml_i$ of $\ml$, $H\ml_i=\ml$. Hence no $\ml_i$ contains an ideal of $\fd\ltimes\ml$ as such ideals are necessarily $H$-stable.

(ii) is clear because $\ml$ is in fact centerless, and any $H$-stable ideal of $\ml$ gives rise to an $H$-stable ideal of $\mj$.  By Lemma~\ref{coeffsimple} this contradicts simplicity of $J$.

For (iii), (iv), one can show as in the proof of \cite[Theorem 13.1]{BDK} that these statements hold for $\mj$.  The necessary arguments (some expressed in preceeding lemmas) carry either verbatim or with minor modifications.  Extension to $\ml$ follows from the TKK construction.
\end{proof}

We will denote the subalgebra of $\fd$ acting by derivations on $\co_r$ (and trivially on $\fs$) by $\fd'$, $H(\fd')$ by $H'$, and the dual of $H'$ by $X'$.

\begin{theorem}\label{coeff.currents}  Let $J$ be a finite simple Jordan $H=U(\fd)$-pseudoalgebra.  Then $\coef(J)$ is isomorphic to a current Jordan pseudoalgebra $\co_r\otimes\fj$, where $\fj$ is a simple linearly compact Jordan superalgebra of growth $\dim\fd-r$.
\end{theorem}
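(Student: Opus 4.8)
The plan is to descend the structure already obtained on the Lie level in Proposition~\ref{13.7.jord} to the Jordan level, the essential point being to straighten the short $\sli_2$-triple so that it lies in the ``constant'' tensor factor of the current algebra. Write $\mj=\coef(J)$ and $\ml=L(\mj)$. By Proposition~\ref{13.7.jord} I may identify $\ml\cong\co_r\otimes\fs$ as topological Lie superalgebras, with $\fs$ simple linearly compact of growth $\dim\fd-r$. The TKK construction endows $\ml$ with a short grading $\ml=\ml_{-1}\oplus\ml_0\oplus\ml_1$ and a short subalgebra $\sli_2$ spanned by a triple $\{f,e,h\}$, and, as in the TKK section, $\mj\cong\ml_{-1}$ as Jordan superalgebras with product $a\circ b=[[a,e],b]$.

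First I would record a structural observation: the grading derivation $\ad h$ is $\co_r$-linear, since $\co_r$ is even and central for the current bracket, so $[h,c\,u]=c\,[h,u]$ for all $c\in\co_r$, $u\in\ml$. The same computation shows that every inner automorphism $\exp(\ad u)$ with $u\in\mathfrak{m}\otimes\fs$, where $\mathfrak{m}$ is the maximal ideal of $\co_r$, is $\co_r$-linear and hence preserves the current structure.

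Next comes the key step: conjugating the short subalgebra $\sli_2$ into $1\otimes\fs$. The reduction modulo the ideal $\mathfrak{n}=\mathfrak{m}\otimes\fs$ carries the triple to a short triple $\{f_0,e_0,h_0\}$ of $\fs$, and this reduction is faithful because $\mathfrak{n}$ is pro-nilpotent and so contains no copy of $\sli_2$. Filtering $\mathfrak{n}$ by the powers $\mathfrak{m}^k\otimes\fs$, each successive quotient is a finite-dimensional $\sli_2$-module, so by complete reducibility and the vanishing of $H^1(\sli_2,-)$ (Whitehead's lemma) I can inductively correct the triple by automorphisms $\exp(\ad u_k)$ with $u_k\in\mathfrak{m}^k\otimes\fs$ so that, modulo $\mathfrak{m}^{k+1}\otimes\fs$, it already lies in $1\otimes\fs$; linear compactness then allows passage to the limit. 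After this ($\co_r$-linear) conjugation $\sli_2\subset 1\otimes\fs$, so $h=1\otimes h_0$, $e=1\otimes e_0$, and the grading becomes $\ml_i=\co_r\otimes\fs_i$ for the short grading $\fs=\fs_{-1}\oplus\fs_0\oplus\fs_1$ determined by $\ad h_0$. Setting $\fj=\fs_{-1}$ with product $x\circ y=[[x,e_0],y]$, the formula $a\circ b=[[a,e],b]$ together with $e=1\otimes e_0$ shows $\mj\cong\ml_{-1}=\co_r\otimes\fj$ as current Jordan superalgebras.

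It remains to verify that $\fj$ has the stated properties. The graded subalgebra of $\fs$ generated by $\fs_{\pm1}$ is a nonzero ideal, hence all of $\fs$ by simplicity; thus $\fs_0=[\fs_{-1},\fs_1]$, and the short grading realizes $\fs$ as the TKK envelope $L(\fj)$. By Theorem~\ref{tkkprop}(b), $\fj$ is simple because $\fs$ is. As a graded summand of the linearly compact $\fs$, $\fj$ is linearly compact, and since $\fs=\fs_{-1}\oplus\fs_0\oplus\fs_1$ with all three summands of equal growth, the growth of $\fj$ equals that of $\fs$, namely $\dim\fd-r$. This yields $\coef(J)=\mj\cong\co_r\otimes\fj$ with $\fj$ simple linearly compact of growth $\dim\fd-r$, i.e. the annihilation algebra of a current Jordan pseudoalgebra, as claimed. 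The main obstacle is the conjugacy step, where one must control the $\sli_2$-action on the infinite-dimensional pronilpotent ideal $\mathfrak{n}$; the rest is bookkeeping resting on Proposition~\ref{13.7.jord} and Theorem~\ref{tkkprop}.
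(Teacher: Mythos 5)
Your argument is essentially correct, but it takes a genuinely different route from the paper at the key step. The paper never moves the short subalgebra: it invokes Theorem~\ref{tkkprop}(a) to see that the ideal $\co_r'\otimes\fs$ is short-graded, thereby inducing a short grading $\fs=\fs_{-1}\oplus\fs_0\oplus\fs_1$ by passing to the quotient; it then identifies $\ml_{-1}$ with $\co_r\otimes\fs_{-1}$ as a vector space by a direct argument using the action of the subalgebra $\fd'\subset\fd$ (hitting a graded component with a suitable $h'\in H'$ and inducting on degree), and finally shows that the non-constant components $e_\alpha\otimes x_\alpha$ of the TKK element $e$ contribute nothing to the product $[[a,e],b]$ for $a,b\in\fj$, again because elements of $\fd'$ act as derivations killing $a$ and $b$. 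Your approach instead conjugates the entire triple $\{f,e,h\}$ into $1\otimes\fs$ by a convergent product of $\co_r$-linear inner automorphisms $\exp(\ad u_k)$, $u_k\in\mathfrak{m}^k\otimes\fs$, after which the current structure of the Jordan product is immediate. What your route buys is independence from the $H$-action on $\coef(J)$ (you only use the topological Lie algebra structure of $\ml$), and it makes the final identification of products a one-line transport of structure; what the paper's route buys is that it avoids the cohomological lifting argument entirely and keeps the identification canonical with respect to the $H$-module structure, which is what is actually used downstream in the reconstruction step.

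One point in your write-up is wrong as stated and needs repair: the successive quotients $(\mathfrak{m}^k/\mathfrak{m}^{k+1})\otimes\fs$ are \emph{not} finite-dimensional $\sli_2$-modules when $\fs$ is infinite-dimensional, so you cannot quote complete reducibility and Whitehead's lemma for finite-dimensional modules verbatim. The argument survives because these quotients are \emph{locally finite} $\sli_2$-modules (the semisimple element of the short triple acts on $\fs$ with only finitely many eigenvalues), and in characteristic $0$ one has $H^1(\sli_2,M)=0$ for any locally finite $M$: a cocycle has finite-dimensional image, hence lands in a finite-dimensional submodule, where Whitehead applies. You should also say a word about why $\fs$, with the induced short grading and $\fs_0=[\fs_{-1},\fs_1]$, may be identified with the TKK envelope $L(\fj)$ before invoking Theorem~\ref{tkkprop}(b) --- though the paper is equally terse on this point --- and note that your claim that all three graded summands of $\fs$ have equal growth is asserted rather than proved, whereas only the equality of the growths of $\fj$ and $\fs$ is actually needed.
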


\begin{proof}  By Proposition~\ref{13.7.jord}, $\ml=L(\coef(J))$ is isomorphic to the superalgebra $\co_r\otimes\fs$. By the TKK construction, $\ml$ carries the short grading $\ml=\ml_{-1}\oplus\ml_0\oplus\ml_1$ preserved by the action of $H$. 

Denote by $\co_r'$ the augmentation ideal of $\co_r$.
The ideal $\co_r'\otimes\fs$ of $\ml$ is short-graded, thus after factoring out we obtain a short grading on $\fs$, $\fs=\fs_{-1}\oplus\fs_0\oplus\fs_1$.  Clearly $\fs_i\subset \ml_i$, i.e. $\fs_i=\fs\cap\ml_i$ (here $\ml_i$ is a component in the short grading, not a filtration component).  Moreover, by the TKK construction, $\fs_{-1}$ is a linearly compact simple Jordan superalgebra.  Denote it by $\fj$.

Let $a=x_{-1}\otimes a_{-1}+x_0\otimes a_0+x_1\otimes a_1$ lie in $\ml_{-1}$, $a_i\in\fs_i, x_i\in X'$.  Apply an element $h'$ of $H'$ such that  $h'(x_0)=1$ to $a$ and factor out $\co_r'\otimes\fs$.  It follows that $a_0\in\ml_{-1}$, i.e. $a_0=0$.  Similarly, $a_1=0$.  Thus $a=x_{-1}\otimes a_{-1}$.

Conversely, let $a\in\fs_{-1}$ and $x$ be a homogeneous elements of $X'$.  We are going to show that $a\otimes x\in \ml_{-1}$ by induction on the total degree of $x$.  Let $a_0\in\ml_0$ and $a_1\in\ml_1$ be such that $x\otimes a+a_0+a_1\ml_{-1}$.  By induction assumption, $\d a_0+\d a_1\in\ml_{-1}$ for any $\d\in\fd'$.  Thus, $\d a_i=0$ and $a_i\in\fs_i$.  Modulo $\co_r'\otimes\fs$, $a_0+a_1$ lies in $\fs_{-1}$, i.e. $a_0=a_1=0$.

Therefore, $\ml_{-1}$ is isomorphic to $\co_r\otimes\fj$ as a vector space. 

 To show that this is an isomorphism of Jordan algebras,  let $a,b\in\fj$, and let $e\in\ml_1$ be the element of the standard basis of the short subalgebra of $\ml$.  Since $e=\sum e_\alpha\otimes x_\alpha$, where $e_\alpha\in\fs_1, x_\alpha\in X'$, we have $ab=\sum [[a,e_\alpha],b]\otimes x_\alpha$.  Assume that some $x_\alpha$ are not constant, then there exists $\d\in\fd'$ that does not kill all $x_\alpha$.  Thus $\d(ab)=0$. On the other hand, $\d$ is a derivation of the Jordan algebra $\ml_{-1}$ and $\d a=\d b=0$, hence $[[a,e_\alpha],b]=0$ unless $x_\alpha\in\FF$.  It follows that the Jordan product on $\ml_{-1}$ can be restricted to $\fj$, and moreover, for $a,b\in\fj$, $(x\otimes a)(y\otimes b)=xy\otimes ab$.

To complete the proof, we remark that $\fj$ has the same growth as $\fs$.
\end{proof}

\begin{remark}  It may be possible to prove Theorem~\ref{coeff.currents} without the use of the TKK-construction.  The crucial part would be a Jordan analog of the Cartan--Guillemin theorem and, then one should proceed along the lines of \cite{BDK} as we do above.
\end{remark}

\begin{corollary}[{\cite[Proposition 5.1]{Kol}}]  Let $J$ be a simple finite Jordan non-super pseudoalgebra.  Then $L(J)$ is isomorphic to a current Lie pseudoalgebra.
\end{corollary}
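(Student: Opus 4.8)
\emph{Proof proposal.} The plan is to push $J$ through the TKK construction and then read off the answer from the Bakalov--D'Andrea--Kac classification \cite{BDK} of simple finite Lie pseudoalgebras. Since $J$ is non-super, its annihilation algebra $\mj=\coef(J)$ is an ordinary Jordan algebra and its TKK envelope $L(J)$ is again a simple finite Lie pseudoalgebra (the pseudoalgebra analogue of Theorem~\ref{tkkprop}(b)). By Proposition~\ref{13.7.jord} we have $\coef(L(J))\cong L(\mj)\cong\co_r\otimes\fs$ with $\fs$ a simple linearly compact Lie algebra of growth $\dim\fd-r$, and by the TKK construction $\fs=L(\fj)$ carries the short grading $\fs=\fs_{-1}\oplus\fs_0\oplus\fs_1$ whose $(-1)$-component $\fj=\fs_{-1}$ is a simple linearly compact Jordan algebra.

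The crucial step is to prove that $\fs$ is finite-dimensional. Here I would invoke the classification of simple linearly compact Lie algebras: besides the finite-dimensional simple Lie algebras, the only ones are the Cartan-type algebras $W_n$, $S_n$, $H_n$, $K_n$, and I claim that none of these admits a short grading. Indeed, each Cartan-type algebra carries an essentially unique irreducible transitive $\ZZ$-grading that is unbounded from above, so it contains no $\sli_2$-triple $\{f,e,h\}$ with $\ad h$ having spectrum in $\{-1,0,1\}$; hence $\fs$ cannot be of Cartan type and must be finite-dimensional. This is precisely the point at which the non-super hypothesis is essential: in the super setting the nilpotent odd variables let the contact grading of $K(1,n)$ and of $E(1,6)$ truncate to a genuine short grading, which is exactly what produces the exceptional Jordan superalgebras $J(m,n)$ and $\jck(1,4)$ that are absent here.

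Finite-dimensionality of $\fs$ forces its growth $\dim\fd-r$ to vanish, so $r=\dim\fd$ and $\co_r\cong H^*$ as topological $\fd$-algebras; thus $\coef(L(J))\cong H^*\otimes\fs=\coef(\cur\fs)$, compatibly with the $\fd$-action. It then remains to compare $L(J)$ against the list of simple finite Lie pseudoalgebras: every such pseudoalgebra is a current $\cur^H_{H'}P$ over a primitive one $P$, and for the nontrivial primitive types $W(\fd')$, $S(\fd')$, $H(\fd')$, $K(\fd')$ (and their proper currents) the corresponding $\fs$ is of Cartan type, hence infinite-dimensional, which the previous step excludes. The only surviving case is $H'=\FF$, i.e. $L(J)\cong\cur\fs$, a current Lie pseudoalgebra over the finite-dimensional simple Lie algebra $\fs$, which is the assertion. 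I expect the finite-dimensionality step---ruling out short gradings on Cartan-type algebras---to be the main obstacle, with the remainder being a matter of matching annihilation algebras against the known classification.
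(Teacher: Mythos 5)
Your proposal is correct in outline and ends up at the same place, but it swaps out the paper's key input for a different one. The paper's proof is essentially a one-line reduction: by Theorem~\ref{coeff.currents}, $\coef(J)\cong\co_r\otimes\fj$ with $\fj$ a simple linearly compact Jordan algebra, and then it quotes the classification of \cite{CKj}, which says that in the non-super case such a $\fj$ is automatically finite-dimensional; currentness of $L(J)$ then follows from \cite{BDK}. You instead work entirely on the Lie side: starting from Proposition~\ref{13.7.jord} you get $L(\coef(J))\cong\co_r\otimes\fs$ with $\fs$ simple linearly compact, observe that $\fs$ inherits a short grading with a short $\sli_2$-subalgebra, and then argue that the infinite-dimensional simple linearly compact Lie algebras $W_n,S_n,H_n,K_n$ admit no such grading. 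This is in effect an unwinding of the citation: the finite-dimensionality statement in \cite{CKj} is, via TKK, equivalent to the non-existence of short gradings on Cartan-type algebras, so you are re-proving the quoted fact rather than using it. What your route buys is self-containedness modulo the Cartan--Guillemin classification; what it costs is that the crucial step is stated too loosely. The phrase ``essentially unique irreducible transitive $\ZZ$-grading'' is not accurate --- these algebras have many $\ZZ$-gradings --- and the correct assertion you need is that \emph{every} nontrivial $\ZZ$-grading of $W_n,S_n,H_n,K_n$ (equivalently, every diagonalizable derivation with spectrum in $\ZZ$) has unbounded spectrum, which follows from the description of diagonalizable derivations as weight assignments to the indeterminates but does require an argument. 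With that step made precise, your proof is sound; as written, the paper's citation of \cite{CKj} is the cleaner and fully justified version of the same underlying fact.
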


\begin{proof}  According to \cite{CKj} a simple linearly compact Jordan (non-super) algebra is necessarily finite-dimensional.  Thus $\coef(J)=\co_r\otimes\fj$.  It follows that $L(\coef(J))=\co_r\otimes\fs$, hence $L(J)$ is current \cite{BDK}.  
\end{proof}

\subsection{Reconstructing conformal superalgebras}  Let $J$ be a finite Jordan conformal superalgebra.  By Corollary~\ref{confalg.reconst}, $J$ is completely determined by its annihilation algebra $\coef(J)$ together with the $\d$-action on $\coef(J)$.  Thus the classification of finite Jordan conformal superalgebras boils down to classifying their possible annihilation subalgebras with the $\d$-action.  By Theorem~\ref{coeff.currents}, $\coef(J)$ is either a simple linearly compact Jordan superalgebra of linear growth or a current algebra over a finite-dimensional Jordan superalgebra.  The classification of linearly compact Jordan superalgebras \cite{CKj} implies the following

\begin{proposition}\label{allconfcoeffs} Let $J$ be a finite Jordan conformal superalgebra.  Then $\coef(J)$ is isomorphic to one of the following superalgebras:
\begin{enumerate}
\item $J(1,n)$;
\item $\js(1,1)$;
\item $\jck(1,4)$;
\item the algebra $\FF[[t]]\otimes_{\FF}\fj$, where $\fj$ is a simple finite-dimensional Jordan superalgebra.
\end{enumerate}
\end{proposition}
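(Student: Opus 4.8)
The plan is to feed the conformal case directly into Theorem~\ref{coeff.currents}. A conformal superalgebra is precisely a pseudoalgebra over $H=\FF[\d]=U(\fd)$ with the one-dimensional Lie algebra $\fd=\FF\d$, so here $\dim\fd=1$. Applying Theorem~\ref{coeff.currents} to $J$ therefore yields an isomorphism $\coef(J)\cong\co_r\otimes\fj$, where $\fj$ is a simple linearly compact Jordan superalgebra of growth $\dim\fd-r=1-r$. Since growth is a nonnegative integer and $r\geq0$ is the number of formal variables in $\co_r$, this immediately forces $r\in\{0,1\}$, and the whole argument reduces to examining these two values.

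When $r=1$ we have $\co_1=\FF[[t]]$ and $\fj$ has growth $0$; a simple linearly compact Jordan superalgebra of growth $0$ is finite-dimensional (as read off from \cite{CKj}, in the same spirit as the non-super statement used in the corollary above), so this is exactly the current case~(4), $\coef(J)\cong\FF[[t]]\otimes\fj$ with $\fj$ simple and finite-dimensional. When $r=0$ we have $\co_0=\FF$, so $\coef(J)\cong\fj$ is itself a simple linearly compact Jordan superalgebra, now of growth $1$, i.e.\ of linear growth. It then remains to identify, from the classification of simple linearly compact Jordan superalgebras in \cite{CKj}, precisely those of linear growth. The three families $J(1,n)$, $\js(1,1)$, and $\jck(1,4)$ are each built on a single even formal variable (respectively $\wedge(1,n)$, $\wedge(1,1)$, and the contact realization underlying $E(1,6)$), and hence have growth $1$; I would verify that these are exactly the growth-$1$ entries on the \cite{CKj} list, producing cases~(1)--(3).

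I expect the main obstacle to be precisely this bookkeeping against \cite{CKj}: one must confirm that the growth of each candidate equals $1$ (so that $m=1$ pins down $J(1,n)$ among the KKM doubles $J(m,n)$, which have growth $m$, and that the two exceptional superalgebras $\js(1,1)$, $\jck(1,4)$ also sit at growth $1$), and that no further growth-$1$ simple superalgebra is omitted. One should also rule out any spurious overlap with case~(4) by noting that none of the three growth-$1$ families is a current algebra; this is automatic, since a current algebra $\FF[[t]]\otimes\fj'$ carries the proper ideal $t\FF[[t]]\otimes\fj'$ and so is never simple, which is exactly why the current structure in Theorem~\ref{coeff.currents} is carried entirely by the $\co_r$ factor while $\fj$ stays simple. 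Combining the $r=0$ and $r=1$ analyses exhausts all possibilities and completes the classification.
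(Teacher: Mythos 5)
Your proposal is correct and takes essentially the same route as the paper: the paper's (implicit) proof likewise applies Theorem~\ref{coeff.currents} with $\dim\fd=1$, splits into the case of a current algebra $\FF[[t]]\otimes\fj$ over a finite-dimensional simple $\fj$ ($r=1$) and the case of a simple linearly compact Jordan superalgebra of linear growth ($r=0$), and then reads the latter off from the classification in \cite{CKj} as $J(1,n)$, $\js(1,1)$, $\jck(1,4)$.
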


It remains to describe all even surjective derivations in cases (1)--(4) (up to an automorphism).  Case (2) was essentially dealt with in Lemma~\ref{der.js}.

\begin{proposition}\label{extcurrents} Let $\d$ be an even surjective derivation of the Jordan superalgebra $\FF[[t]]\otimes\fj$, where $\fj$ is a simple finite-dimensional Jordan superalgebra.  Then by an automorphism of $\FF[[t]]\otimes\fj$, $\d$ can be conjugated to $\frac{\d}{\d t}$.
\end{proposition}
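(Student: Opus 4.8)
The plan is to first determine the general form of an even derivation of $A=\FF[[t]]\otimes\fj$, then read off the constraint imposed by surjectivity, and finally build the conjugating automorphism as a composition of a coordinate change on $\FF[[t]]$ with a $t$-dependent gauge transformation valued in $\aut\fj$. Throughout I use that $\fj$, being simple and finite-dimensional over the algebraically closed field $\FF$, is unital with centroid $\FF$.

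First I would compute $\Der A$. For $a\in\fj$ write $\d(1\otimes a)=\sum_n t^n\otimes\phi_n(a)$; expanding the derivation property on $(1\otimes a)\circ(1\otimes b)=1\otimes(a\circ b)$ forces every coefficient map $\phi_n$ to be an even derivation of $\fj$. Subtracting the evidently well-defined derivation $D=\sum_n t^n\otimes\phi_n$, the difference $\d'=\d-D$ annihilates $1\otimes\fj$. Writing $\d'(t\otimes 1)=\sum_m t^m\otimes d_m$ and applying $\d'$ to $(t\otimes a)\circ(1\otimes b)=t\otimes(a\circ b)$ yields $d_m\circ(a\circ b)=(d_m\circ a)\circ b$ for all $a,b$, i.e. each left multiplication $L_{d_m}$ lies in the centroid of $\fj$; hence $d_m=\mu_m 1_\fj$ and $\d'(t\otimes 1)=w(t)\otimes 1$ for some $w\in\FF[[t]]$. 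Using $f\otimes a=(f\otimes 1)\circ(1\otimes a)$ then gives $\d'=w(t)\frac{\d}{\d t}\otimes\id$, so that every even derivation takes the form
\[
\d=w(t)\frac{\d}{\d t}\otimes\id+\sum_n t^n\otimes\phi_n,\qquad w\in\FF[[t]],\ \phi_n\in\Der\fj.
\]

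Next I would exploit surjectivity. Reducing modulo $tA$, the composite $A\xrightarrow{\d}A\to A/tA\simeq\fj$ has image $w(0)\fj+\phi_0(\fj)$, which collapses to $\phi_0(\fj)$ when $w(0)=0$. But any derivation annihilates the unit, so $\phi_0(1_\fj)=0$, whence $\phi_0$ is not injective and, by finite-dimensionality, not surjective; therefore surjectivity of $\d$ forces $w(0)\neq 0$, i.e. $w$ is invertible in $\FF[[t]]$. Conjugating by the continuous automorphism of $\FF[[t]]$ given by the coordinate change $s=\int w^{-1}\,dt$ (valid because $w(0)\neq 0$) carries $w(t)\frac{\d}{\d t}\otimes\id$ to $\frac{\d}{\d s}\otimes\id$ and leaves the remaining term of the same shape $\sum_m s^m\otimes\phi'_m$. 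Thus we may assume $\d=\frac{\d}{\d t}\otimes\id+\Phi$ with $\Phi=\sum_n t^n\otimes\phi_n\in\Der(\fj)[[t]]$.

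The final and hardest step is to gauge away $\Phi$. Regarding $\d$ as the connection operator $\psi\mapsto\psi'+\Phi(t)\psi$ on $\fj[[t]]$, I would solve the formal initial-value problem $h'=-\Phi\,h$, $h(0)=\id$, order by order in $t$; conjugation by the induced map $\tilde h$ of $A$ then sends $\frac{\d}{\d t}\otimes\id+\Phi$ to $\frac{\d}{\d t}\otimes\id$. The main obstacle is to verify that the solution $h(t)$ lands in $\aut\fj$ rather than merely in $\mathrm{GL}(\fj)$, so that $\tilde h$ is a genuine automorphism of the Jordan superalgebra. For this I would set $F(t)=h(t)(a\circ b)-h(t)(a)\circ h(t)(b)$ and use that each $\phi_n$, hence $\Phi(t)$, is a derivation to show $F'=-\Phi(F)$ with $F(0)=0$; uniqueness then gives $F\equiv 0$, confirming that $h(t)$ is an even Jordan automorphism for every $t$ and that $\tilde h$ is a topological automorphism of $\FF[[t]]\otimes\fj$. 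Renaming the variable finally conjugates $\d$ to $\frac{\d}{\d t}$.
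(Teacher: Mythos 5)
Your overall strategy is the same as the paper's: put $\d$ in the normal form $w(t)\frac{\d}{\d t}\otimes\id+\sum_n t^n\otimes\phi_n$, use surjectivity to control the leading coefficient, and remove the $\Der\fj$-valued tail by a formal gauge transformation. Your last step (solving $h'=-\Phi h$, $h(0)=\id$, and checking $F'=-\Phi F$, $F(0)=0$ to see that $h$ is an automorphism) is the one-shot version of the paper's iterated conjugations by $\exp(\ad\,Y)$ with $Y$ an antiderivative of the tail, converging in the linearly compact topology; both work. Your handling of the leading coefficient is in fact the careful reading of a step the paper states too strongly: $\d$ only descends to $\mj/t\mj$ when $w(0)=0$, so the contradiction rules out $w(0)=0$ rather than ``$P(t)$ non-constant,'' and the remaining non-constant $w$ with $w(0)\neq 0$ must be normalized by the coordinate change $s=\int w^{-1}dt$, exactly as you do.

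There is, however, one genuine gap: your premise that a simple finite-dimensional Jordan superalgebra over an algebraically closed field is unital is false. The three-dimensional Kaplansky superalgebra $K_3=\FF e\oplus(\FF x+\FF y)$, with $e\circ e=e$, $e\circ x=\frac12x$, $e\circ y=\frac12y$, $x\circ y=e$, is simple and has no unit, and it occurs in the classification used in Proposition~\ref{allconfcoeffs}. You use the unit in two essential places: in deriving the normal form (the elements $1\otimes\fj$ and $t\otimes 1$, and the identification $d_m=L_{d_m}(1_\fj)$), and in the surjectivity step (``$\phi_0(1_\fj)=0$, hence $\phi_0$ is not surjective''). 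Both arguments break for $K_3$. The conclusion survives --- the normal form for $\Der(\FF[[t]]\otimes\fj)$ holds for any central simple $\fj$ by the standard description of derivations of such tensor products (this is the ``well-known'' fact the paper invokes), and every even derivation of $K_3$ kills the idempotent $e$ spanning its even part, so it cannot be surjective --- but as written your proof covers only the unital $\fj$, and the non-unital case needs to be addressed separately (or by adjoining a unit).
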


\begin{proof} For brevity denote $\FF[[t]]\otimes\fj$ by $\mj$.  It is well-known that $\d=P(t)\frac{\d}{\d t}\otimes 1+\sum t^j\otimes\d_j$, $\d_j\in\Der\fj$.  If $P(t)$ is not constant, then we get a surjective derivation on a finite-dimensional Jordan superalgebra $\mj/t\mj$ which is impossible. Thus we may assume that $P(t)=c$. Consider the automorphism $\exp(\ad \sum \frac{t^j}{j+1}\otimes\d_j)$.  Applied to $\d$, it produces a derivation of $\mj$ with a higher degree in $t$.  Since $\mj$ is a linearly compact algebra, by repeating this argument we obtain $c\frac{\d}{\d t}$ in the limit.  Then by rescaling $c=1$.
\end{proof}

The description of surjective derivations in cases (1) and (3) is similar to Proposition~\ref{extcurrents}.  We will use the embedding of the Jordan algebra into its TKK algebra and the description of even surjective derivations of $K(1,n)$ and $CK(1,4)$ in \cite{FK}.

\begin{proposition}\label{extseries}  Let $\d$ be an even surjective derivation of a Jordan superalgebra $\mj$ isomorphic to $J(1,n)$ or $\jck(1,4)$.  Then by an automorphism of $\mj$, $\d$ can be conjugated to $\frac{\d}{\d t}$.
\end{proposition}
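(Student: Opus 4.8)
The plan is to reduce both cases to the corresponding statement at the level of the TKK Lie superalgebra, where the even surjective derivations are already classified, and then descend back to the Jordan superalgebra using Theorem~\ref{tkkprop}(d) and the derivation computations of Section~\ref{sec1}. By the embeddings established in the examples, $\mj = J(1,n)$ sits inside its TKK algebra $K(1,n+3)$ via \eqref{eq.tkkforkkm}, and $\mj = \jck(1,4)$ sits inside $E(1,6) \subset K(1,6)$, with the short subalgebra the same in both pictures. A surjective derivation $\d$ of $\mj$ extends, by Theorem~\ref{tkkprop}(c), to a derivation of the TKK algebra that annihilates the short subalgebra; conversely, by Corollary~\ref{derjmn} and Corollary~\ref{derjck}, the even surjective derivations available to us lie in $K(1,n)$ (resp.\ $K(1,3)$). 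So the first step is to recast $\d$ as an element of $K(1,m)$ for the appropriate $m$ and read off its general form.

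The second step mirrors the proof of Proposition~\ref{extcurrents}. The algebra $K(1,m)$ is filtered by degree in $t$ and in the $\xi_i$, and $\mj$ inherits this filtration as a linearly compact algebra. An even surjective derivation, written as a vector field on $\wedge(1,m)$, must have a nonvanishing leading $\frac{\d}{\d t}$-component: if the coefficient of $\frac{\d}{\d t}$ vanished at the origin, $\d$ would induce a surjective derivation on the finite-dimensional quotient by the maximal ideal, which is impossible for a simple finite-dimensional Jordan superalgebra exactly as in Proposition~\ref{extcurrents}. Hence after rescaling we may normalize the leading term to $\frac{\d}{\d t}$, and the surjectivity hypothesis forces the $t$-independent obstructions to disappear.

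The third step is the iterative normalization: having arranged $\d = \frac{\d}{\d t} + (\text{higher-order terms})$, I would conjugate by an inner automorphism of the form $\exp(\ad X)$, with $X$ chosen to cancel the lowest-degree correction, exactly as in Proposition~\ref{extcurrents} where $\exp\!\bigl(\ad \sum \frac{t^j}{j+1}\otimes\d_j\bigr)$ was used. Each such conjugation raises the degree of the discrepancy between $\d$ and $\frac{\d}{\d t}$, and linear compactness lets us pass to the limit to obtain $\frac{\d}{\d t}$ on the nose. The key point is that the relevant $X$ must itself lie in (the adjoint image of) $K(1,m)$, so that $\exp(\ad X)$ is a genuine automorphism of $\mj$ preserving the short subalgebra and descending to an automorphism of the Jordan superalgebra.

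\emph{Main obstacle.} The delicate part is not the formal $t$-adic convergence, which is routine given linear compactness, but verifying that the normalization can be carried out \emph{within} the derivation algebra actually available to $\mj$ rather than within the full $K(1,m)$. For $J(1,n)$ this is essentially immediate since $\Der J(1,n)$ already contains $K(1,n)$ by Corollary~\ref{derjmn}. For $\jck(1,4)$, however, the containment $\Der\jck(1,4)\subset K(1,3)$ of Corollary~\ref{derjck} is only an inclusion, so I would have to check that each canceling element $X$ and each normalized $\d$ genuinely preserves the $-\alpha$-eigenspace of $\ad(\omega_5\omega_6)$ that cuts out $\jck(1,4)$ inside $K(1,6)$; equivalently, that the exceptional embedding is compatible with the grading used in the iteration. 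This compatibility — ensuring the automorphisms and derivations used in the reduction restrict correctly to the exceptional subalgebra — is where the argument requires genuine care beyond the $J(1,n)$ case.
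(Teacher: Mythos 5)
Your overall strategy---extend $\d$ to the TKK algebra, use the finite-dimensional-quotient argument to force a nonzero leading $\frac{\d}{\d t}$-term, then iteratively conjugate by $\exp(\ad X)$ and pass to the limit using linear compactness---matches the paper's, which carries out exactly this normalization at the level of $\ml=L(\mj)$ via the proof of \cite[Theorem 5.11]{FK}. However, there are two genuine gaps. First, you assert that by Corollary~\ref{derjmn} the relevant derivations of $J(1,n)$ lie in $K(1,n)$. They do not: that corollary gives $\Der J(1,n)=K(1,n)\oplus P(1,n)\d_\theta$, so $\d=\delta+a\d_\theta$ with a possibly nonzero $a\in P(1,n)$. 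The paper first checks that surjectivity of $\d$ forces surjectivity of $\delta$, normalizes $\delta$ to $\frac{\d}{\d t}$, and is then still left with $\d=\frac{\d}{\d t}+a\d_\theta$; this residual term is removed not by the iterative scheme but by an explicit involutive automorphism $\psi$ fixing $K(1,n)$ and sending $\theta\mapsto\int\! a-\theta$, where $\frac{\d}{\d t}\int\! a=a$. Your proposal never confronts this component, and it is the step that is genuinely specific to $J(1,n)$.

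Second, you correctly flag that the canceling elements $X$ must be derivations of $\mj$ and not merely elements of $K(1,m)$, but you leave this as an unresolved ``obstacle'' and locate the difficulty in the wrong place (compatibility with the $-\alpha$-eigenspace for $\jck(1,4)$). The paper's resolution is uniform for both cases and rests on Theorem~\ref{tkkprop}(d): one must arrange $\ad X|_{\sli_2}=0$. Writing the candidate $l\in\ml_0$ as $R_v+T$ with $R_v$ a right multiplication and $T\in\inder\mj$, and applying $\frac{\d}{\d t}\ad l$ to the identity of $\mj$, one finds $\frac{\d}{\d t}v=0$, so $T$ alone satisfies $\frac{\d}{\d t}T=g_0$ and may be used as the canceling element; it is automatically a derivation of $\mj$. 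Without an argument of this kind your iteration only produces an automorphism of $L(\mj)$, not of $\mj$, so the descent back to the Jordan superalgebra is incomplete.
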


\begin{proof}  According to Corollary~\ref{derjmn} every derivation of $J(1,n)$ has the form $\d=\delta+a\d_\theta$, where $\delta\in K(1,n)$ and $a\in P(1,n)\subset J(1,n)$.  Let $\d$ be surjective.  For every $b+c\theta\in\mj$, $(\delta+a\d_\theta)(b+c\theta)=
\delta(b)+(-1)^{|c|}ac+\delta(c)\theta$, thus we see that $P(1,n)\theta=({\mathrm Im\;}\delta)\theta$, i.e. $\delta$ is surjective.  In the case of $\mj=\jck(1,4)$, $\d=\delta$ by Corollary~\ref{derjck}.   

In both cases we will first conjugate $\delta$ to $\frac{\d}{\d t}$. 

$\mj$ is a linearly compact superalgebra, hence carries a descending filtration $\mj=\mj^{-q}\supset\ldots\mj^0\supset\mj^1\supset\ldots$.  If $\delta\mj^0\subset\mj^0$ , then $\delta\mj^1\subset\mj^1$ and $\delta$ induces a surjective derivation on a unital finite-dimensional Jordan algebra $\mj^0/\mj^1$.  This is impossible, hence $\delta\mj^0\not\subset\mj^0$

Let $\ml=L(\mj)$.  The descending filtration of $\mj$ induces the descending filtration of $\ml=\ml^{-q}\supset\ldots$.  Moreover, the extension of $\delta$ to $\ml$ is such that $\delta\ml^0\not\subset\ml^0$.  By the proof of \cite[Theorem 5.11]{FK}, $\delta=d\frac{\d}{\d t}+\ad g_0$, where $d\in\FF$ is non-zero and $g_0\in\ml^0$.   Using an inner automorphism of $\ml$, we make $\delta=d\frac{\d}{\d t}$ and then, by rescaling, we may assume $d=1$. It remains to show that such an automorphism can be choses from $\aut\mj$. 

The specific construction of automorphism in \cite[Proposition 2.13]{FK} is as follows: let $m$ be the maximal integer such that $g_0\in\ml^m\backslash\ml^{m+1}$.  Then for some $l_{m+q}\in\ml^{m+q}$, $\frac{\d}{\d t} l_{m+q}= g_0$ (here $d$ is the filtration's depth).  Applying $\exp(\ad l_{m+q})$ to $\delta$, we obtain a derivation $\frac{\d}{\d t}+$terms in $\ml^{m+q}$.  By repeating this argument, we obtain $\frac{\d}{\d t}$ in the limit.  Therefore, it suffices to show that we can always choose $l_{m+q}$ such that $\ad l_{m+q}$ is a derivation of $\mj$, i.e. that $\ad l_{m+q}|_{\sli_2}=0$.  Indeed, by the definition of $l_{m+q}$, $\frac{\d}{\d t}\ad l_{m+q}(\sli_2)=0$.  Since $l_{m+q}$ lies in $\ml_0$, $l_{m+q}=R_v+T$ where $R_v$ is the right multiplication by $v\in\mj$, and $T\in\inder\mj$.  By applying $\frac{\d}{\d t}\ad l_{m+q}$ to the identity of $\mj$, it follows that $\frac{\d}{\d t} v=0$.  Hence, $\frac{\d}{\d t}T=g_0$ and we may choose $T$ instead of $l_{m+q}$.

This completes the proof in the case of $\mj=\jck(1,4)$.  For $\mj=J(1,n)$, we obtained $\d=\frac{\d}{\d t}+a\d_\theta$ for some $a\in P(1,n)$.  Let $\int\!\!a$ be an element of $K(1,n)$ such that $\frac{\d}{\d t}(\int\!\! a)=a$.  A direct check shows that the map $\psi$ of $\mj$ that preserves $K(1,n)$ and sends $\theta$ to $\int\!\! a-\theta$ is an involutive automorphism.  To complete the proof, it suffices to check that $\psi$ conjugates $\partial$ to $\frac{\partial}{\partial t}$: indeed, $\psi$ preserves the equality $\partial|_{K(1,n)}=\frac{\partial}{\partial t}$ and $(\psi\partial\psi)(\theta)=0$.
\end{proof}

\begin{theorem}  A simple finite Jordan conformal superalgebra is isomorphic to one of the conformal superalgebras in the following list:
\begin{enumerate}
\item $J_n$;
\item $\js_1$;
\item $\jck_4$;
\item a current conformal superalgebra over a simple finite-dimensional Jordan superalgebra.
\end{enumerate}
\end{theorem}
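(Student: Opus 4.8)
The plan is to assemble the ingredients already established, since by this point the theorem is largely a matter of reconstruction together with the derivation computations. Let $J$ be a simple finite Jordan conformal superalgebra. By the lemma preceding Corollary~\ref{confalg.reconst}, $J$ is torsion-free and hence free of finite rank over $\FF[\d]$; consequently $\coef(J)=\FF[[t]]\otimes_{\FF[\d]}J$ is linearly compact, and the operator induced on it by $\d\in\FF[\d]$ is an even \emph{surjective} derivation (surjectivity because, on a free generator, $\d$ acts up to sign as $d/dt$, which is onto $\FF[[t]]$). By Corollary~\ref{confalg.reconst}, $J$ is recovered uniquely from the pair $(\coef(J),\d)$, so it suffices to enumerate the possible annihilation algebras together with their distinguished even surjective derivations, up to automorphism.

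First I would invoke Proposition~\ref{allconfcoeffs}: $\coef(J)$ is isomorphic to one of $J(1,n)$, $\js(1,1)$, $\jck(1,4)$, or $\FF[[t]]\otimes\fj$ with $\fj$ simple finite-dimensional. Next, in each case I would normalize the $\d$-action by the matching classification of even surjective derivations: Lemma~\ref{der.js} handles $\js(1,1)$, Proposition~\ref{extseries} handles $J(1,n)$ and $\jck(1,4)$, and Proposition~\ref{extcurrents} handles the current algebra $\FF[[t]]\otimes\fj$. The conclusion is uniform across all four cases: after conjugating by a suitable automorphism of $\coef(J)$ (and rescaling), the derivation $\d$ may be taken to equal $\frac{\d}{\d t}$.

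The final, genuinely case-by-case step is to match each normalized pair $(\coef(J),\frac{\d}{\d t})$ with an explicit conformal superalgebra from Section~\ref{sec2}. The superalgebras $J_n$, $\js_1$, $\jck_4$, and the current conformal superalgebra over $\fj$ were constructed precisely so that their annihilation algebras are $J(1,n)$, $\js(1,1)$, $\jck(1,4)$, and $\FF[[t]]\otimes\fj$ respectively, each carrying $\frac{\d}{\d t}$ as the $\d$-action (see the construction of $J_n$, Example~\ref{ex.js}, and Example~\ref{ex.jck}). Since reconstruction is unique, $J$ is forced to be isomorphic to the example realizing its pair, which exhausts the four families in the list.

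The main obstacle is not in this final assembly but has already been absorbed into Propositions~\ref{extseries} and~\ref{extcurrents}: proving that \emph{every} even surjective derivation is conjugate to $\frac{\d}{\d t}$. The delicate part there is transporting the normalization from the TKK Lie superalgebra $\ml=L(\coef(J))$—where it follows from \cite[Theorem 5.11]{FK}—back down to $\coef(J)$ while keeping the conjugating automorphism inside $\aut\coef(J)$, i.e.\ acting trivially on the short $\sli_2$. A secondary point I would check carefully is that the examples' annihilation algebras carry exactly $\frac{\d}{\d t}$ (not merely some surjective derivation), so that the two ends of the reconstruction diagram genuinely coincide and no ambiguity in the $\d$-action survives.
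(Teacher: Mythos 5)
Your proposal is correct and follows essentially the same route as the paper, whose proof is precisely the assembly you describe: Proposition~\ref{allconfcoeffs} for the list of possible annihilation algebras, Lemma~\ref{der.js} and Propositions~\ref{extcurrents} and~\ref{extseries} to normalize the surjective derivation to $\frac{\d}{\d t}$, and Corollary~\ref{confalg.reconst} to identify $J$ with the corresponding explicit example. The paper states this in one line; you have merely spelled out the same argument in more detail.
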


\begin{proof} Follows from Proposition~\ref{allconfcoeffs} and, for specific cases, Lemma~\ref{der.js} and Propositions~\ref{extcurrents} and~\ref{extseries}.
\end{proof}

\end{document}